\def\eq#1{\eqref{#1}}
\def\ben#1{\begin{equation}%
    #1\end{equation}}
\def\dsub{d_{\sub}}
\numberwithin{equation}{section}
\def\cite{\citet}
\def\@noindentfalse{\global\let\if@noindent\iffalse}
\def\@noindenttrue {\global\let\if@noindent\iftrue}
\def\@aftertheorem{%
  \@noindenttrue
  \everypar{%
    \if@noindent%
      \@noindentfalse\clubpenalty\@M\setbox\z@\lastbox%
    \else%
      \clubpenalty \@clubpenalty\everypar{}%
    \fi}}
\theoremstyle{plain}
\newtheorem{theorem}{Theorem}[section]
\newtheorem{definition}[theorem]{Definition}
\newtheorem{lemma}[theorem]{Lemma}
\newtheorem{corollary}[theorem]{Corollary}
\theoremstyle{definition}
\newtheorem{remark}[theorem]{Remark}
\newtheorem{example}{Example}[section]
\def\note#1{\par\smallskip%
\noindent\kern-0.01\hsize%
\setlength\fboxrule{0pt}\fbox{\setlength\fboxrule{0.5pt}\fbox{%
\llap{$\boldsymbol\Longrightarrow$ }%
\vtop{\hsize=0.98\hsize\parindent=0cm\small\rm #1}%
\rlap{$\enskip\,\boldsymbol\Longleftarrow$}
}}%
}
\let\original@left\left
\let\original@right\right
\renewcommand{\left}{\mathopen{}\mathclose\bgroup\original@left}
\renewcommand{\right}{\aftergroup\egroup\original@right}
\def\given{\mskip 0.5mu plus 0.25mu\vert\mskip 0.5mu plus 0.15mu}
\newcounter{bracketlevel}%
\def\@bracketfactory#1#2#3#4#5#6{%
\expandafter\def\csname#1\endcsname##1{%
\global\advance\c@bracketlevel 1\relax%
\global\expandafter\let\csname @middummy\alph{bracketlevel}\endcsname\given%
\global\def\given{\mskip#5\csname#4\endcsname\vert\mskip#6}\csname#4l\endcsname#2##1\csname#4r\endcsname#3%
\global\expandafter\let\expandafter\given\csname @middummy\alph{bracketlevel}\endcsname%
\global\advance\c@bracketlevel -1\relax%
}%
}
\def\bracketfactory#1#2#3{%
\@bracketfactory{#1}{#2}{#3}{relax}{0.5mu plus 0.25mu}{0.5mu plus 0.15mu}
\@bracketfactory{b#1}{#2}{#3}{big}{1mu plus 0.25mu minus 0.25mu}{0.6mu plus 0.15mu minus 0.15mu}
\@bracketfactory{bb#1}{#2}{#3}{Big}{2.4mu plus 0.8mu minus 0.8mu}{1.8mu plus 0.6mu minus 0.6mu}
\@bracketfactory{bbb#1}{#2}{#3}{bigg}{3.2mu plus 1mu minus 1mu}{2.4mu plus 0.75mu minus 0.75mu}
\@bracketfactory{bbbb#1}{#2}{#3}{Bigg}{4mu plus 1mu minus 1mu}{3mu plus 0.75mu minus 0.75mu}
}
\newcommand{\be}[1]{\begin{equation}\label{#1}}
\newcommand{\ee}{\end{equation}}
\newcommand{\bl}[1]{\begin{lemma}\label{#1}}
\newcommand{\el}{\end{lemma}}
\newcommand{\br}[1]{\begin{remark}\label{#1}}
\newcommand{\er}{\end{remark}}
\newcommand{\bt}[1]{\begin{theorem}\label{#1}}
\newcommand{\et}{\end{theorem}}
\newcommand{\bd}[1]{\begin{definition}\label{#1}}
\newcommand{\ed}{\end{definition}}
\newcommand{\bp}[1]{\begin{proposition}\label{#1}}
\newcommand{\ep}{\end{proposition}}
\newcommand{\bc}[1]{\begin{corollary}\label{#1}}
\newcommand{\bcj}[1]{\begin{conjecture}\label{#1}}
\newcommand{\ecj}{\end{conjecture}}
\def\beq#1{\begin{equation*}#1\end{equation*}}
\def\beqn#1{\begin{equation}#1\end{equation}}
\def\beqs#1{\begin{equation*}\begin{split}#1\end{split}\end{equation*}}
\def\beqsn#1{\begin{equation}\begin{split}#1\end{split}\end{equation}}
\def\baqn#1{\begin{align}#1\end{align}}
\newcommand{\R}{\ensuremath{\mathbb{R}}}
\newcommand{\N}{\ensuremath{\mathbb{N}}}
\newcommand{\EE}{\mathop{{}\mathbb{E}}\mathopen{}}\let\IE\EE
\newcommand{\PP}{\mathop{{}\mathbb{P}}\mathopen{}}\let\IP\PP
\newcommand{\RR}{\ensuremath{\mathbb{R}}}
\newcommand{\dd}{d}
\newcommand{\cG}{\mathcal{G}} 
\newcommand{\cS}{\mathcal{S}}
\newcommand{\cP}{\mathcal{P}} 
\newcommand{\cL}{\mathcal{L}}
\newcommand{\wconv}{\Longrightarrow}
\newcommand{\asconv}{\longrightarrow}
\def\bklr#1{\bigl(#1\bigr)}
\def\klr#1{(#1)}
\def\abs#1{\vert#1\vert}
\def\babs#1{\bigl\vert#1\bigr\vert}
\def\toinf{\rightarrow \infty}
\def\cF{{\cal F}}
\def\cW{{\cal W}}
\def\eps{\varepsilon}
\def\sub{\mathrm{sub}}
\def\dsub{d_{\sub}}
\def\phi{\varphi}
\newcommand{\Def}{\coloneqq}
\newcommand{\I}{\mathop{{}\mathrm{I}}\mathopen{}}
\newcommand{\inj}{{\mathop{\mathrm{inj}}}}
\renewcommand{\hom}{{\mathop{\mathrm{hom}}}}
\newcommand{\Po}{\mathop{\mathrm{Po}}}
\def\wt{\widetilde}
\begin{document}    


\title{\bf Graphon-valued stochastic processes from population genetics }

\author{Siva Athreya, Frank den Hollander and Adrian R\"ollin}

\date{\itshape Indian Statistical Institute Bangalore, Leiden University \\
and National University of Singapore}


\maketitle


\begin{abstract}
The goal of this paper is to develop a theory of graphon-valued stochastic processes, and to construct and analyse a natural class of such processes arising from population genetics. We consider finite populations where individuals change type according to Wright-Fisher resampling. At any time, each pair of individuals is linked by an edge with a probability that is given by a type-connection matrix, whose entries depend on the current empirical type distribution of the entire population via a fitness function. We show that, in the large-population-size limit and with an appropriate scaling of time, the evolution of the associated adjacency matrix converges to a random process in the space of graphons, driven by the type-connection matrix and the underlying Wright-Fisher diffusion on the multi-type simplex. In the limit as the number of types tends to infinity, the limiting process is driven by the type-connection kernel and the underlying Fleming-Viot diffusion.

\medskip\noindent
\textit{MSC 2010}: 
05C80, 
60J68, 
60K35. 

\medskip\noindent
\textit{Keywords:} 
Graphons, graphon dynamics, Moran model, Wright-Fisher diffusion, Fleming-Viot diffusion, Skorohod topology.

\end{abstract}

\makeatletter
\makeatother


\section{Introduction}
\label{intro}

In this paper we construct a class of graphon-valued Markov processes from a sequence of dynamically evolving dense graphs. First, we characterise weak convergence for processes that take values in the metric space of graphons (see Theorem~\ref{thm1}). Afterwards, using this characterisation, we find interesting scaling limits with the help of models from population genetics and use these to construct \emph{graphon-valued diffusions} (see Theorem~\ref{13} and Theorem~\ref{17}).

\subsection{Background}


Various fields of research -- including physics, computer science, sociology and epidemiology -- have produced a considerable literature on \emph{dynamics of real-world networks}. The first example of a time-changing network was proposed by \cite{Holland1977}, for the evolution of social networks. Accounts of subsequent developments are given in \cite{Snijders2001} and \cite{Snijders2010a}, including a description of statistical procedures to monitor the effects of the dynamics. In epidemiology, there is a long tradition of modelling the spread of pathogens on social contact networks. In particular, the study of sexually transmitted infections needs to take into account that partnership networks change at about the same speed as the infections spread. Most of the mathematical analysis in this area is done by means of compartmental models, which goes back to~\cite{Kermack1927}. These models lead to systems of ordinary differential equations, and consequently stochastic effects are lost. Agent-based models are also frequently used, but these are typically intractable mathematically (see, for example, \cite{Morris1997}). In physics and computer science, much of the efforts are driven by simulations or mathematical non-rigorous techniques. See \cite{Holme2012} for a survey with many real-world applications, and an attempt to unify various sub-disciplines that have emerged. 

In contrast to these efforts, the mathematical treatment of the topic is still in its early stages. Some interacting particle systems, such as oriented percolation, can be interpreted as the spread of an infection on time-varying networks, but these networks are intrinsically highly structured (typically taking the form of a lattice), and therefore are very different from the networks we have in mind in the present paper. Also, most of the work deals with \emph{sparse} networks, where the degrees typically remain bounded. Quantities of interest are the mixing times and the cover times of random walks on these networks, under different types of dynamics. These enable the description of propagation of information through the network (see, for example, \cite{Lesk08}, \cite{LevPer17}, and references therein).

\cite{Crane2016} probably contains the first mathematically rigorous attempt to capture the limiting dynamics of time-varying networks as the number of edges per vertex grows to infinity. In the context of dense graph limit theory, initiated by \cite{Lovasz2006}, these limits can be understood by means of \emph{graphons}, which can be turned into a compact metric space. While it is intuitively (and mathematically) easy to construct random dynamics of graphs on~$n$ vertices for each fixed~$n$, it is non-trivial to realise them in such a way that the dynamics remain visible in the limit as~$n\to\infty$. This is because, as the time-evolving dense graph sequence approaches the appropriate evolving graphon, a lot of averaging takes place that typically results in a \emph{deterministic} flow. 

Crane's starting point is the Aldous-Hoover theory for infinitely exchangeable arrays. Let~$\cG_\infty$ be the space of infinite arrays equipped with the product topology. Define a modulus map~$\abs{\cdot}$ that takes an array~$\Gamma\in\cG_\infty$ to a graphon~$\abs{\Gamma}$ (which is well-defined with probability~$1$ when the array is exchangeable). One of the main results of \cite{Crane2016} is that a~$\cG_\infty$-valued exchangeable Markov process~$(\Gamma(s))_{s \geq 0}$ induces a graphon-valued Markov process~$(\abs{\Gamma(s)})_{s \geq 0}$, and that the latter has locally bounded variation. Consequently, this route only leads to jump processes and deterministic flows on the space of graphons, \emph{not} to diffusion-like processes (see \cite{Crane2016}, \cite{CK18}).

However, this does \emph{not} imply that there \emph{are} no diffusion-like processes on graphons. In fact, as we show in what follows, the limitation is imposed by the theory of infinitely exchangeable arrays, not by the theory of graphons. More precisely, we start with the Aldous-Hoover theory and represent an~$[0,1]$-valued infinitely exchangeable array~$(X_{ij})_{i,j\in\N}$ as
\begin{equation}\label{1}
X_{ij} = f(U,U_i,U_j,U_{ij}), \qquad i,j \in \N\Def\{1,2,\dots\},
\end{equation}
for some function~$f\colon\,[0,1]^4\to [0,1]$, where~$U$, $(U_i)_{i\in\N}$ and~$(U_{ij})_{i,j\in\N}$ are independent and identically distributed uniform random variables. If we assume that~$(U(s))_{s \geq 0}$, $(U_i(s))_{s \geq 0}$ and~$(U_{ij}(s))_{s \geq 0}$ are Markov processes having the uniform distribution as their equilibrium distribution, then we can construct from~\eq{1} a~$\cG_\infty$-valued process
\begin{equation*} 
X_{ij}(s) = f\big(U(s),U_i(s),U_j(s),U_{ij}(s)\big),
\qquad i,j \in \N.
\end{equation*}
It is clear that, in general, $(X_{ij}(s))_{s \geq 0}$ is not Markov, since functions of Markov processes need not be Markov under the filtration of the mapped process. Moreover, even if it is Markov, then once the process is projected to the graphon space many of its properties are lost.  Below we will illustrate that interesting Markov processes on the graphon space need not be Markov on~$\cG_\infty$, and thus are \emph{not captured through the lens of the Aldous-Hoover theory}.


\subsection{A short primer on graphon-valued stochastic processes}

The approach we take in this paper is to work directly with networks and their graphon limits, and we illustrate this by means of a classical model from population genetics. Concretely, we assign types to each individual and define connection probabilities based on the types. Then we impose the dynamics on the types from population genetic models to enable a scaling limit that results in diffusive dynamics on the space of graphons. The type space is allowed to be continuous or discrete. This allows us to also observe the dynamics as a rescaled limit of a time-evolving finite dense graph sequence, thus providing a natural justification for the dynamics. We present this approach first in the case where each individual is one of two types and all individuals within each type are connected with each other via the below example.

The following example shows that graphon-valued diffusions can arise from discrete models.

\begin{example}  
Consider~$n$ individuals, each carrying Type~$0$ or Type~$1$. Suppose that each individual, independently and at rate~$1$, randomly draws an individual from the population (possibly itself) and adopts its type. Let~$X^n(s)$ be the number of individuals of type~$0$ at time~$s$. For each~$s \geq 0$, think of the~$n$ individuals as the vertices of a random graph~$G^n(s)$ in which individuals~$i$ and~$j$ are connected by an edge with probability~$1$ if they are of the same type and remain disconnected if their types are different. Using~\eq{1b} we see that, for any connected graph~$F$ on~$k$ vertices, the subgraph density of~$F$ in~$G^n(s)$ is
\baqn{
t_F(G^n(s)) &\Def  
\frac{\mbox{\# of copies of~$F$ in~$G^n(s)$}}
{\mbox{\# of copies of~$F$ in the complete graph}}\nonumber\\
&= \frac{X^n(s)^k + \bclr{n-X^n(s)}^k}{n^k} 
\label{4}
} 
(we will give a rigorous definition of~$t_F$ later). If~$F$ consists of multiple components, then~$t_F$ is just the product of~$t_{F_i}$ with~$F_i$ the individual connected components of~$F$. Let~$Y^n(s) = \frac{1}{n} X^n(ns)$ represent the fraction of individuals of Type~$1$ in the population at time~$s$ on time scale~$n$. It is well known that if~$Y^n(0)$ converges weakly to~$Y(0)$, then~$Y^n=(Y^n(s))_{s \geq 0}$ converges weakly to~$Y=(Y(s))_{s \geq 0}$ in path space with respect to the Skorohod topology as~$n\to\infty$, where the limiting process is the Wright-Fisher diffusion on~$[0,1]$, given by the SDE 
\beq{
Y(s) = Y(0) + \int_0^s \sqrt{Y(u)(1-Y(u))}\,\dd W(u)
} 
with initial condition~$Y(0)$ and with~$W=(W(s))_{s \geq 0}$ being standard Brownian motion. Clearly,~\eq{4} implies that
\begin{equation}\label{100}
\lim_{n\to\infty} t_F(G^n(ns)) = Y(s)^k + (1-Y(s))^k.
\end{equation}
We observe that the right-hand side of~\eq{100} equals~$t_F(\tilde{h}(s))$, the subgraph density corresponding to the graphon~$\tilde{h}(s)$ drawn in Figure~\ref{fig1}. It is also easily seen that~$t_F(\tilde{h}_s)$ is adapted to the filtration generated by~$Y_s$, and is a Markov process. Furthermore, we can calculate the modulus of continuity of~$t_F(\tilde{h})$ and conclude in the subgraph distance~$\tilde{h}$ is diffusive. Consequently, we have constructed a graphon-valued diffusion and a sequence of finite graph-valued processes that converges to it (see Section~\ref{s:graphons} for precise definitions).\hfill$\square$
\end{example}

\def\stka#1#2#3{\begin{tikzpicture}[scale=0.35]
	\def\s{#1}
	\def\sm{#2}
	\coordinate (e1) at (10,0);
	\coordinate (e2) at (0,10);
 	\coordinate (u1) at ($(e1)+(e2)$);
	\coordinate (u2) at ($\s*(e1) + \sm*(e2)$);
	\coordinate (u3) at ($\s*(e2) + \sm*(e1)$);
	\draw (0,0) -- (e1) -- ($(e1) + (e2)$) -- (e2) -- (0,0);
	\draw ($\s*(e2)$) -- ($\s*(e2) + (e1)$);
	\draw ($\s*(e1)$) -- ($\s*(e1) + (e2)$);
	\draw ($.5*\s*(u1)$) node {$1$};
	\draw ($\s*(u1) + .5*\sm*(u1)$) node {$1$};
	\draw ($\s*(e2) + .5*(u2)$) node {$0$};
	\draw ($\s*(e1) + .5*(u3)$) node {$0$};
	\draw ($\s*(e1)$) node[yshift=-13pt] {#3};
\end{tikzpicture}
}
\begin{figure}
\centering
\stka{.7}{.3}{ $Y(s)$}
\enskip
\caption{\label{fig1} \small
Graphical representation of the limiting graphon-valued stochastic process arising from a simple dynamical graph model, with~$(Y(s))_{s \geq 0}$ the Wright-Fisher diffusion.}
\end{figure} 

\hspace{0.4cm}
The next example shows that interesting and natural processes can be constructed in such a way that they are not Markov when seen through the Aldous-Hoover theory of infinitely exchangeable arrays, but are Markov when projected to graphons. 

\begin{example}
Recall the notation~$\{X_{ij}\colon\, i,j \in \N\}$ used for exchangeable arrays earlier. Consider the Wright-Fisher diffusion~$(Y(s))_{s\geq 0}$ on~$[0,1]$, and put
\begin{equation*}
X_{ij}(s) = \I[U_{ij}\leq Y(s)], \qquad \text{$i,j \in \N$, $s \geq 0$,}
\end{equation*}
where the~$U_{ij}$ are independent and identically distributed uniform random variables. This process is non-Markov in~$\cG_\infty$ with respect to its filtration
\begin{equation*}
\cF(s)=\sigma(X_{ij}(u)\colon\, 0\leq u\leq s,\,i,j\in\N).
\end{equation*}
Although~$Y(s)$ is measurable with respect to~$\cF(s)$, because
\begin{equation*}
\lim_{n\to\infty} \frac{2}{n(n-1)} \sum_{1\leq i <  j \leq n} X_{ij}(s) 
=Y(s) \qquad \text{almost surely},
\end{equation*}
the individual~$U_{ij}$ are \emph{not}. Still, the projected graphon-valued process is~$(Y(s))_{s \geq 0}$, which \emph{is} Markov with respect to its filtration~$\cF(s)=\sigma(Y_u\colon\, 0\leq u\leq s)$. \hfill~$\square$
\end{example}

\hspace{0.4cm}
In view of the above, it seems that the Aldous-Hoover theory, while being a natural starting point, is ultimately not the right way to develop a theory of graphon processes. Being Markov on the space of infinitely exchangeable arrays is too strong a condition, since it restricts the possible dynamics in the graphon space to those with locally bounded variation.


\subsection{Outline}

The remainder of the paper is organised as follows. In Section~\ref{4.5}, we present the preliminaries required. We provide a brief introduction into the theory of graphons in Section~\ref{s:graphons} and a quick review of population models in Section~\ref{s:pop}. In Section~\ref{5}, we present our main results. We begin in Section~\ref{6} by discussing the Skorohod topology on the graphon space and provide a framework for understanding weak convergence in graphon space by means of sub-graph densities (Theorem~\ref{thm1} and Corollary~\ref{cor5}). In Section~\ref{9}, we show that the models discussed in Section~\ref{s:pop} provide a natural class of graphon dynamics: the graphon is obtained by connecting pairs of individuals with a probability that is given by a type-connection matrix, whose entries depend on the empirical type distribution of the entire population via a fitness function (Theorem~\ref{13}). After that we let the number of types tend to infinity and arrive at a graphon dynamics governed by the Fleming-Viot diffusion (Theorem~\ref{17}). In Section~\ref{ex1}, we give concrete examples to illustrate the abstract results in Section~\ref{5} (Examples~\ref{exam1}--\ref{exam3}) and offer some remarks on possible generalisations. In Section~\ref{18}, we give the proofs of the theorems.  


\section{Preliminaries} 
\label{4.5}

We begin this section by stating the minimally required preliminaries on dense graphs, graphons, their equivalence classes, and the metric space they belong to.


\subsection{Graphons} 
\label{s:graphons} 

We provide a brief introduction of the preliminaries of graphons and construct an example of a simple diffusion on the space of
graphons. Let~$\cG_n$ be the set of all graphs on~$n$ vertices. Graphs on~$n$ vertices with order~$n^2$ edges are referred to as \emph{dense graphs}. For any two graphs there is a natural definition of distance between them, given by the subgraph distance. More precisely, if~$F$ is a simple graph on~$k$ vertices and~$G$ is a graph on~$n$ vertices, then the \emph{subgraph density} is defined as
\begin{equation}\label{1b}
t_{F}(G) \Def \frac{\abs{\hom(F,G)}}{n^k} \in[0,1],
\end{equation} 
where~$\hom(F,G)$ denotes the set of homomorphisms from~$F$ to~$G$.\footnote{Recall that a homomorphism from a graph~$F$ to a graph~$G$ is a function that maps the vertices of~$F$ to the vertices of~$G$ in such a way that edges are mapped to edges.}
 
Let~$\cF$ denote the set of isomorphism classes of finite graphs given by~$\cF=\{F_i\}_{i\in\N}$, with each~$F_i$ being a representative of an isomorphism class. We can then define the \emph{subgraph distance} of two graphs~$G_1$ and~$G_2$ as
\beq{
\dsub(G_1,G_2) \Def  \sum_{i\in\N} 2^{-i}|t_{F_i}(G_1) - t_{F_i}(G_2)|. 
}
This metric has some nice properties. For instance, it is known that~$(\cF,\dsub)$ is a discrete metric space and that the completion of~$\cF$ with respect to this metric is given by the space~$\cal W$, which is the space of measurable functions~$h\colon\,[0,1]^2 \to [0,1]$ satisfying~$h(x,y) = h(y,x)$ for all~$(x,y) \in [0,1]^2$. The elements of~$\cal W$ are called \emph{graphons}. The definition of~$\dsub$ can be extended to graphons. For~$h \in {\cal W}$ and~$F$ a simple finite graph on~$k$ vertices, we let
\begin{equation}\label{2}
t_{F}(h) \Def \int_{[0,1]^k}  \prod_{\{i,j\} \in E(F)} h(x_i,x_j)\, d x_1 \cdots d x_k.
\end{equation}

One of the key results in dense graph theory the above definition is based on is the following theorem.

\begin{theorem}\label{THM0} 
Let~$(G_n)_{n\geq 1}$ be a dense-graph sequence that is Cauchy with respect to~$\dsub$. Then there exists an~$h \in \cal W$ such that
\begin{equation*} 
\dsub\klr{G_n,h} \to 0 \quad (n \to \infty). 
\end{equation*}
\end{theorem}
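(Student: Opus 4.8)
The plan is to reduce the statement to the fundamental existence theorem for dense graph limits and then transfer convergence of subgraph densities back into the metric $\dsub$. The first observation is that the geometric weights in the definition of $\dsub$ decouple the individual subgraph densities: a dense-graph sequence $(G_n)_{n\geq1}$ is Cauchy for $\dsub$ if and only if, for every fixed $i\in\N$, the real sequence $\klr{t_{F_i}(G_n)}_{n\geq1}$ is Cauchy in $[0,1]$. Indeed, each term $2^{-i}\abs{t_{F_i}(G_n)-t_{F_i}(G_m)}$ is dominated by $\dsub(G_n,G_m)$, and conversely the tail $\sum_{i>N}2^{-i}\leq 2^{-N}$ together with control of the first $N$ densities bounds $\dsub(G_n,G_m)$. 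Since $[0,1]$ is complete, for each $i$ there is a limit $a_i\Def\lim_{n\to\infty}t_{F_i}(G_n)\in[0,1]$. It therefore suffices to produce a single graphon $h\in\cW$ realising all these limits simultaneously, i.e.\ with $t_{F_i}(h)=a_i$ for every $i$; bounded convergence applied to the weighted sum will then yield $\dsub(G_n,h)\to0$ for the full sequence.

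To produce $h$ I would invoke the Lov\'asz--Szegedy theory of dense graph limits (see \cite{Lovasz2006}). First identify each $G_n$ with its associated step-function graphon $W_{G_n}\in\cW$, and recall the two pillars of the theory: (i) the space of graphons modulo weak isomorphism, equipped with the cut metric $\delta_\square$, is compact; and (ii) the counting lemma, which gives $\abs{t_F(W)-t_F(W')}\leq \abs{E(F)}\,\delta_\square(W,W')$, so that each map $t_{F}$ is continuous in the cut metric. By compactness there is a subsequence $W_{G_{n_k}}$ converging in $\delta_\square$ to some $h\in\cW$, and by the counting lemma $t_{F_i}(h)=\lim_{k}t_{F_i}(G_{n_k})=a_i$ for every $i$. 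Since $t_{F_i}(G_n)\to a_i=t_{F_i}(h)$ already holds along the \emph{full} sequence (each being a convergent real sequence), the reduction of the previous paragraph gives $\dsub(G_n,h)\to0$, which is the claim. Note that $\dsub$ is only a pseudometric on $\cW$ (weakly isomorphic graphons share all densities), so $h$ is to be read as any representative of its weak-isomorphism class.

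The genuine content is hidden in pillar~(i), the existence of a limiting graphon, and this is where the difficulty lies. The standard route uses the weak (Frieze--Kannan) regularity lemma to approximate each $W_{G_n}$, at every scale $\eps$, by a step function with a number of steps bounded in terms of $\eps$ alone, and then extracts a limit by a martingale/diagonal compactness argument in the cut metric. The counting lemma of pillar~(ii), by contrast, is routine: it follows by telescoping the product defining $t_F$ in~\eq{2} one edge at a time and estimating each resulting factor by the cut norm. Thus the main obstacle is entirely the compactness of graphon space together with the representability of the limit object, which I would cite from the graph-limit literature rather than reprove here.
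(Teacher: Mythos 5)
Your proposal is correct and takes essentially the same route as the paper: the paper offers no self-contained argument but simply cites \cite{Lovasz2006} (Szemer\'edi partitions plus the Martingale Convergence Theorem) --- precisely the regularity/compactness machinery you defer to for the existence of the limit graphon --- and alternatively \cite{Diaconis2008} via exchangeability. Your added reduction between $\dsub$-Cauchyness and coordinatewise convergence of the densities $t_{F_i}$, and the transfer back via the counting lemma, is the routine glue connecting that cited result to the statement as formulated here, and it is carried out correctly.
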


For a proof of the above see \cite{Lovasz2006}, which uses Szemer\'edi partitions and the Martingale Convergence Theorem, or \cite{Diaconis2008}, who show that it can be proved by using results from \cite{Hoover1979} and \cite{Aldous1981}. Note that~$h$ above is in general not unique, but this will not be of importance for what follows; we refer to \cite{Borgs2008} for a discussion of this and related questions. We refer to \cite[Chapter 11]{Lovasz2012} for a detailed discussion of convergence of dense graph sequences.

A convenient way of ``creating'' finite (random) graphs on~$n$ vertices from a  standard kernel~$h$ is the following model, which we will denote by~$G(n,h)$. Firstly, let~$U_1,\dots,U_n$ be i.i.d.\ with uniform distribution on~$[0,1]$. Secondly, for each two vertices~$i$ and~$j$, connect them with probability~$h(U_i,U_j)$, independently of all the other edges. It is not difficult to prove that 
\ben{\label{1a} 
\dsub\bklr{G(n,h),h} \to 0 \quad \text{ almost surely} \quad (n \to\infty).
}
This is, in some sense, the basic law of large numbers in dense graph theory. In this paper, instead  of sampling the labels i.i.d.\ and uniformly from~$[0,1]$, we will allow the labels to be sampled in a more general way.

We can interpret~\eq{2} as the normalized number of homomorphisms of~$F$ into a weighted graph with the uncountable vertex set~$[0,1]$, with edge weights given by~$h$. Furthermore, any finite simple graph~$G$ on~$n$ vertices can be represented canonically by a graphon via
\begin{equation}\label{3}
h^{G}(x,y) \Def 
\begin{cases}
1 &\text{if there is an edge between vertex~$\lceil{nx}\rceil$  and vertex~$\lceil{ny}\rceil,$}\\
0 &\text{otherwise.}
\end{cases}
\end{equation}
We easily verify that
\beq{
t_F(G) = t_F\bclr{h^{G}},
}
so that all definitions are consistent. The representation of a graph via a graphon is not unique. Indeed, there are graphs that have the same graphon representation (for example, all complete graphs have the same representation~$h^G\equiv 1$). In the context of graph limit theory this is not an issue, as long as we assume that the number of vertices of the graph sequence tends to infinity. Moreover, the representation in~\eq{3} depends on the ordering of the vertices. Since typically we are interested only in graph properties that are independent of vertex labels (for example subgraph densities), it is natural to consider equivalence classes of graphons obtained by letting two graphons be equivalent when they are identical ``up to vertex labels''. To make this rigorous, let~$\Sigma$ be the space of measure-preserving bijections~$\sigma\colon\, [0,1] \to [0,1]$. Then~$h_1\equiv h_2$ if there is a~$\sigma\in\Sigma$ such that 
\begin{equation*}
h_1(x,y) = h_2(\sigma x, \sigma y), \qquad x,y\in[0,1].
\end{equation*}
The equivalence relation yields the quotient space~$\widetilde{\cal W}$. It is known that~$(\widetilde{\cal W},\dsub)$ is a compact metric space, and therefore complete and separable (see, for example, \cite[p.\,695]{Crane2016} or \cite[Section 2]{Bollobas2009}, although it is not difficult to deduce this fact from the fundamental results of \cite[Theorems~9.23 and~11.5]{Lovasz2012}).

The reader is referred to \cite[Section 2]{Bollobas2009}, \cite{Borgs2008}, \cite{Lovasz2012} for a structured and more detailed exposition on dense graphs and graphons. Below is a simple example where we see a diffusion arising naturally on graphon space.


\subsection{Population dynamics} 
\label{s:pop}

For obtaining dynamics on graphons we will use models from population biology. We provide a very brief and quick review of the literature. The multi-type Moran model has~$m+1$ types, which are labelled~$0,\ldots,m$. Consider~$n$ individuals, each carrying one of the types from~$\{0,\ldots,m\}$. Suppose that each individual, independently and at rate~$1$, randomly draws an individual from the population (possibly itself) and adopts its type. Let~$X_\ell^{m,n}(s)$ denote the number of individuals of type~$\ell$ at time~$s$, where~$0\leq\ell\leq m-1$, and let
\beq{ 
X^{m,n}(s)= \bclr{X^{m,n}_0(s),\dots,X^{m,n}_{m-1}(s)}
} 
be the corresponding vector of type counts. For convenience, we also define the number of individuals of type~$m$ at time~$s$ to be~$X^{m,n}_m(s) \Def n-\sum_{\ell=0}^{m-1} X^{m,n}_\ell(s)$. Whenever we consider the entire process, we will drop time and write~$X^{m,n}_\ell$ for the individual counting processes, or~$ X^{m,n}$ for the multivariate counting process.

Consider the space-time rescaling
\beq{
Y^{m,n}(s) \Def \frac{1}{n} X^{m,n}(ns), \quad s \geq 0,
}
which consists of~$m$ components
\begin{equation}
Y^{m,n}(s)= \bclr{Y^{m,n}_0(s),\dots,Y^{m,n}_{m-1}(s)},
\end{equation}
representing the fractions of individuals of types~$0,\ldots,m-1$ at time~$ns$. Analogously to before, the fraction of individuals 
of type~$m$ at time~$ns$ is denoted by~$Y^{m,n}_m(s) \Def 1-\sum_{\ell=0}^{m-1} Y^{m,n}_\ell(s)$. It is known (see \cite[Section 2]{Dawson1993}) that if 
\beqn{\label{10}
Y^{m,n}(0) \wconv Y^{m}(0)\quad(n\toinf),
}
then 
\beqn{\label{11}
Y^{m,n} \wconv Y^m\quad (n\toinf),
}
(keep in mind that~\eq{11} states weak convergence at the process level). The limiting process consists of~$m$ components
\begin{equation*}
Y^{m}(s)= \bclr{Y^{m}_0(s),\dots,Y^{m}_{m-1}(s)},
\end{equation*}
taking values in the~$m$-dimensional simplex 
\begin{equation*}
\cS^m  = \Big\{x = (x_0,\dots,x_{m-1})\in\RR^m\colon\, \text{$x_\ell \geq 0$ for all~$0 \leq \ell \leq m-1$, $\textstyle\sum_{\ell=0}^{m-1} x_\ell \leq 1$}\Big\}, 
\end{equation*}  
and is referred to as the \emph{Wright-Fisher diffusion}. For convenience, we let~$Y^m_m(s) = 1-\sum_{\ell=0}^{m-1} Y^m_\ell(s)$. We will also need the cumulative distribution function of the type distribution, defined as
\begin{equation}\label{11b}
F^m(s; x) \Def  \sum_{\ell=0}^{\lfloor (m+ 1)x \rfloor} Y^m_\ell(s), \qquad x \in [0,1),
\end{equation}
and~$F^m(s; 1)=1$. Note that~$x \mapsto F^m(s;x)$ can have jumps at~$x \in \{0,\tfrac{1}{m+1},\dots,\tfrac{m}{m+1}\}$.

Recalling the definition of~$F^m$ at~\eq{11b}, we can define the corresponding empirical type distribution
\begin{equation*}
Z^m(s) = \sum_{\ell=0}^m Y^m_\ell(s)\,\delta_{\ell/(m+1)}.
\end{equation*}
Note that
\begin{equation*}
Z^m(s;[0,x]) = F^m(s;x), \qquad x \in [0,1].
\end{equation*}
It is known (see \cite[Section 2]{Dawson1993}) that if
\begin{equation}\label{16}
Z^m(0) \wconv Z(0)\quad(m\toinf),
\end{equation}
then
\begin{equation*}
Z^m \wconv  Z\quad(m\toinf).
\end{equation*}
The limiting process~$Z$ takes values in~$\cP([0,1])$, the set of probability measures on~$[0,1]$ endowed with the topology of weak convergence, and is referred to as the \emph{Fleming-Viot diffusion}. We will also need the process of cumulative distribution function of~$Z$, defined by
\begin{equation*}
F(s;x) \Def Z(s;[0,x]), \qquad x \in [0,1].
\end{equation*}


\section{Main results} 
\label{5}

On the metric space~$(\widetilde{\cal W},\dsub)$, we can define the Skorohod topology on~$\widetilde{\cal W}$-valued paths in the usual way; see for example  or \cite{Billingsley1999,Ethier1986}. Denote by~$D=D([0,\infty),\widetilde{\cal W})$ the set of c\`adl\`ag paths in~$\widetilde{\cal W}$, which can be equipped with a metric~$d^\circ$, turning~$D$ into a complete and separable metric space since~$(\widetilde{\cal W},\dsub)$, being compact, is complete and separable. We use ``$\asconv$`` to denote convergence with respect to the underlying metric space, and we use ``$\wconv$'' to denote weak convergence with respect to the Borel-sigma-algebra induced by that metric. Note that we will use ``$\asconv$`` and ``$\wconv$'' also for convergence, respectively, weak convergence in~$(\widetilde{\cal W},\dsub)$ itself. Let~$\tilde{h}$ be a~$\widetilde{\cal W}$-valued stochastic process. We write~$\tilde{h}(s)$ to denote the value of the process a time~$s\geq 0$, which is an equivalence class of graphons. If~$h(s)$ is a representative graphon of the equivalence class~$\tilde h(s)$, we write~$h(s; x,y)$ to denote the value of that graphon evaluated at coordinates~$(x,y)\in[0,1]^2$. Note that, for a given~$\tilde{h}\in D$ and a given simple finite graph~$F$, we can consider the real-valued process~$t_F(\tilde{h})$ as an element of~$D([0,\infty),[0,1])$. We write~$t_F(\tilde{h}(s))$ to denote the value of the process a time~$s\geq 0$.
   
    
\subsection{Weak convergence of graphons} 
\label{6}

Let~$(\tilde{h}^n)_{n\in\N}$ be a sequence of~$\widetilde{\cal W}$-valued stochastic processes. In order to prove weak convergence in the graphon space, that is, $\tilde{h}^n\wconv \tilde{h}$ in~$D$, we need to establish (as in the case of processes on any metric space):
\begin{enumerate}
\item[$(i)$] 
Convergence of finite-dimensional distributions~$(\tilde{h}^n(s_i))_{1\leq i\leq d}\wconv (\tilde{h}(s_i))_{1\leq i\leq d}$ for all points~$s_1,\dots,s_d\geq 0$ at which~$\tilde{h}$ is continuous almost surely.
\item[$(ii)$] 
Tightness of the sequence~$(\tilde{h}^n)_{n\in\N}$.
\end{enumerate}
Our first theorem provides equivalent criteria for establishing weak convergence by means of the corresponding subgraph density processes. In the statement and proof, $\IP$ will denote the probability measure under which the expectation~$\IE$ is taken.

\begin{theorem}\label{thm1} 
Let~$\tilde{h}$ and~$(\tilde{h}^n)_{n\in\N}$ be random elements in~$D([0,\infty),\widetilde{\cal W})$. Then the following are equivalent:
\begin{enumerate}
\item[$(i)$] 
$\tilde{h}^n\wconv \tilde{h}$ as~$n\toinf$.
\item[$(ii)$] 
For all~$d\geq 1$ and all graphs~$F_1,\dots,F_d\in\cF$,
\begin{equation}\label{6a}
\bclr{t_{F_1}(\tilde{h}^n),\dots,t_{F_d}(\tilde{h}^n)}\wconv\bclr{t_{F_1}(\tilde{h}),\dots,t_{F_d}(\tilde{h})} 
\quad (n\to\infty).
\end{equation}
\item[$(iii)$] 
For every graph~$F\in\cF$, the sequence~$(t_F(\tilde{h}^n))_{n\in\N}$ is tight and, for all~$k\geq 1$, all real numbers~$0\leq s_1<\dots < s_d<\infty$ where~$\tilde{h}$ is continuous almost surely, and all graphs~$F_1,\dots,F_d\in \cF$, 
\beqn{\label{7}
\lim_{n\toinf}\EE\bclc{t_{F_1}(\tilde{h}^n(s_1))\cdots t_{F_d}(\tilde{h}^n(s_d))} = \EE\bclc{t_{F_1}(\tilde{h}(s_1))\cdots t_{F_d}(\tilde{h}(s_d))}.
}
\end{enumerate}
\end{theorem}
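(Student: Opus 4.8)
The plan is to reduce everything to the single observation that the map $\Phi\colon \widetilde{\cal W}\to[0,1]^{\N}$, $\Phi(\tilde g)=(t_{F_i}(\tilde g))_{i\in\N}$, is an isometric embedding when $[0,1]^{\N}$ carries the metric $\rho(a,b)=\sum_{i}2^{-i}|a_i-b_i|$; indeed $\dsub$ is by definition the $\Phi$-pullback of $\rho$. Since $\widetilde{\cal W}$ is compact, $\Phi(\widetilde{\cal W})$ is a compact, hence closed, subset of $[0,1]^{\N}$, and $\Phi$ induces an isometric embedding of $D([0,\infty),\widetilde{\cal W})$ onto the closed subset of $\Phi(\widetilde{\cal W})$-valued paths in $D([0,\infty),[0,1]^{\N})$. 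Consequently $\tilde h^n\wconv\tilde h$ in $D([0,\infty),\widetilde{\cal W})$ if and only if $\Phi\circ\tilde h^n\wconv\Phi\circ\tilde h$ in $D([0,\infty),[0,1]^{\N})$, and the whole theorem becomes a statement about the coordinate processes $t_{F_i}(\tilde h^n)$. I would record at the outset the multiplicativity $t_F(\tilde g)\,t_{F'}(\tilde g)=t_{F\sqcup F'}(\tilde g)$ for the disjoint union $F\sqcup F'$: it shows that the linear span $\mathbb A$ of $\{t_F\colon F\in\cF\}$ is an algebra, and $\mathbb A$ separates points of $\widetilde{\cal W}$ (that is precisely injectivity of $\Phi$) and contains the constants; by Stone--Weierstrass $\mathbb A$ is dense in $C(\widetilde{\cal W})$, and the same reasoning makes the analogous product functionals dense in $C(\widetilde{\cal W}^{\,d})$.

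I would then prove the equivalences as the cycle $(i)\Rightarrow(ii)\Rightarrow(iii)\Rightarrow(i)$. For $(i)\Rightarrow(ii)$: each map $\tilde g\mapsto(t_{F_1}(\tilde g),\dots,t_{F_d}(\tilde g))$ is continuous (in fact Lipschitz in $\dsub$), and on the compact space $\widetilde{\cal W}$ uniformly continuous, so it induces a continuous map of Skorohod path spaces; $(ii)$ then follows from $(i)$ by the continuous mapping theorem. For $(ii)\Rightarrow(iii)$: taking $d=1$ gives $t_F(\tilde h^n)\wconv t_F(\tilde h)$, hence tightness of each $(t_F(\tilde h^n))_n$; and at times $s_1<\dots<s_d$ at which $\tilde h$ is a.s.\ continuous, $(ii)$ yields convergence of the finite-dimensional distributions, so applying it to the bounded continuous functional $(x_{ij})\mapsto\prod_j x_{jj}$ of the evaluations gives the product-moment convergence~\eqref{7}.

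The substance is $(iii)\Rightarrow(i)$, which I would split into tightness and identification of the limit. For identification, suppose $\tilde h^{n'}\wconv\tilde h^{*}$ along a subsequence (tightness being granted); by $(i)\Rightarrow(ii)$ the product moments of $\tilde h^{n'}$ converge to those of $\tilde h^{*}$ at continuity points of $\tilde h^{*}$, while~\eqref{7} forces them to converge to those of $\tilde h$. Hence, at the co-countable (so dense) set of times that are a.s.\ continuity points of both, $\EE[\prod_j t_{F_j}(\tilde h^{*}(s_j))]=\EE[\prod_j t_{F_j}(\tilde h(s_j))]$ for all $F_1,\dots,F_d\in\cF$; by the Stone--Weierstrass density noted above these moments determine the joint law of $(\tilde h(s_1),\dots,\tilde h(s_d))$, so $\tilde h^{*}$ and $\tilde h$ share all finite-dimensional distributions on a dense set and therefore have the same law in $D$. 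Thus every subsequential limit equals $\tilde h$, and with tightness this gives $(i)$.

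The main obstacle is the tightness of $(\tilde h^n)_n$ in $D([0,\infty),\widetilde{\cal W})$ from $(iii)$. Compact containment is free since $\widetilde{\cal W}$ is compact, so it suffices to control the $J_1$ modulus $w'(\tilde h^n,\delta)$ in $\dsub$; truncating the series $\dsub=\sum_i 2^{-i}|t_{F_i}(\cdot)-t_{F_i}(\cdot)|$ at a level $N$ reduces this to controlling the joint modulus of the vector $(t_{F_1}(\tilde h^n),\dots,t_{F_N}(\tilde h^n))$. Here lies the delicate point: in the Skorohod topology the joint tightness of a vector is strictly stronger than tightness of its coordinates, because the coordinates may have asymptotically mismatched jump times. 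What rescues us is that all subgraph densities of one graphon path jump simultaneously, together with the fact that $(iii)$ supplies tightness of \emph{every} subgraph density, in particular of all products $t_{F_i\sqcup F_j}=t_{F_i}t_{F_j}$: a persistent mismatch, or a persistent intermediate value, in the jumps of finitely many coordinates would force some product density $t_G$ to exhibit two merging jumps with a distinct limiting intermediate value, contradicting tightness of $(t_G(\tilde h^n))_n$. Formally I would organise this through Jakubowski's tightness criterion applied to the point-separating additive family generated by $\{t_F\}$, checking its hypotheses by means of the multiplicativity relation; verifying that monomial (product) tightness upgrades to the full joint control is the crux of the argument.
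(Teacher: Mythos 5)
Your overall architecture --- $(i)\Rightarrow(ii)$ by continuous mapping, $(ii)\Rightarrow(iii)$ by projection and bounded convergence, identification of subsequential limits via the Stone--Weierstrass-dense algebra spanned by the $t_F$ --- mirrors the paper's proof (which cites Diao et al.\ for density and Ethier--Kurtz Theorem~9.1 where you invoke Jakubowski), and you have correctly isolated the crux: passing from tightness of each real-valued process $t_F(\tilde h^n)$ to joint tightness of finite vectors, equivalently to tightness of sums, which is what Jakubowski's additive family or the Ethier--Kurtz criterion actually require. But the rescue you propose --- that a persistent mismatch of merging jumps in two coordinates would force some product density $t_{F_1\uplus F_2}=t_{F_1}t_{F_2}$ to be non-tight --- is not just unverified, it is false, because a product jump is annihilated whenever the other factor vanishes there. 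Concretely: let $B$ be the checkerboard graphon ($B(x,y)=1$ iff $x,y$ lie on opposite sides of $1/2$), let $A=pB$ for a fixed $p\in(0,1)$, and let $C$ be the two-cliques graphon ($C(x,y)=1$ iff $x,y$ lie on the same side of $1/2$). For connected $F$ on $k$ vertices, $t_F(C)=2\cdot2^{-k}$, while $t_F(B)=2\cdot2^{-k}$ if $F$ is bipartite and $t_F(B)=0$ otherwise; extending multiplicatively over components, $t_F(B)=t_F(C)$ for every $F$ all of whose components are bipartite, whereas $t_F(A)=t_F(B)=0$ for every $F$ with a non-bipartite component. Take the deterministic paths $\tilde h^n$ equal to $A$ on $[0,1)$, to $B$ on $[1,1+1/n)$, and to $C$ on $[1+1/n,\infty)$. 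Then each $t_F(\tilde h^n)$ has at most one effective jump (at time $1$ if all components of $F$ are bipartite, at time $1+1/n$ otherwise), so every sequence $(t_F(\tilde h^n))_n$ is tight; and all product moments in $(iii)$ converge to those of the path $\tilde h$ equal to $A$ on $[0,1)$ and $C$ on $[1,\infty)$, whose only discontinuity is at time~$1$. So $(iii)$ holds in full, yet $\tilde h^n$ has two merging jumps whose $\dsub$-sizes are bounded away from zero (edge density changes at time $1$, triangle density at time $1+1/n$), so $(\tilde h^n)_n$ is not tight in $D([0,\infty),\widetilde{\cal W})$ and $(i)$ fails.

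This pinpoints why the crux cannot be closed as you plan it: the mismatched jumps of $t_{\mathrm{edge}}(\tilde h^n)$ and $t_{\mathrm{triangle}}(\tilde h^n)$ are invisible to every product density, since $t_{\mathrm{triangle}}(\tilde h^n)\equiv 0$ up to time $1+1/n$ makes $t_{\mathrm{edge}\uplus\mathrm{triangle}}(\tilde h^n)$ have a single jump (hence tight), while the sum $t_{\mathrm{edge}}(\tilde h^n)+t_{\mathrm{triangle}}(\tilde h^n)$ --- the object your additive family genuinely needs --- is not tight. Your premise that ``all subgraph densities of one graphon path jump simultaneously'' is also not usable: different densities can be unaffected by different jumps of the underlying path, which is exactly what happens here. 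For completeness of the comparison: the paper's own proof of $(iii)\Rightarrow(i)$ contains the same gap, compressed into the unjustified assertion that tightness of each $(t_{F_i}(\tilde h^n))_n$ implies tightness of ``finite collections of them''; your write-up has the virtue of exposing the difficulty, but neither argument closes it, and the example above shows that no argument can --- the tightness hypothesis in $(iii)$ must be strengthened (e.g.\ to joint tightness of the vectors $(t_{F_1}(\tilde h^n),\dots,t_{F_d}(\tilde h^n))$ in $D([0,\infty),[0,1]^d)$, or tightness of all finite sums $\sum_i t_{F_i}(\tilde h^n)$) for the implication to $(i)$ to be true.
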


For finite graphs it is typically easier to work with \emph{injective} homomorphisms, the set of which we denote by~$\inj(F,G)$. If~$F$ has~$k$ vertices, then for~$n\ge k$ there are at most~$n_{(k)}\Def n(n-1)\dots (n-k+1)$ such mappings, which provides a standard normalisation to the count of injective homomorphisms. Thus, if~$F$ is a graph on~$k$ vertices and~$G$ a graph on~$n$ vertices, then we define
\begin{equation*}
t^\inj_{F}(G) \Def \frac{\abs{\inj(F,G)}}{n_{(k)}} \in[0,1]
\end{equation*}
if~$k\leq n$ and~$t^\inj_{F}(G)\Def0$ otherwise. It is easy to see that
\beqn{\label{8}
\babs{t^{\inj}_{F}(G)-t_{F}(G)} \leq \frac{C_F}{n}
}
for some constant~$C_F$ that only depends on~$F$. So, for most purposes the two objects are equivalent in the limit~$n\toinf$.

We say that~$G:= (G(s))_{s\geq 0}$ is a \emph{graph process} if the induced graphon process~$h^{G}:= \clr{h^{G(s)}}_{s\geq 0}$ is a random element of~$D([0,\infty),\widetilde{\cal W})$. Similarly as before, we can consider the process~$t^\inj_F(G)$ as an element of~$D([0,\infty),[0,1])$. In order to simplify notation, for a sequence of graph processes~$(G_n)_{n\in\N}$ we will write~$G_n\wconv \tilde{h}$ instead of~$h^{G_n}\wconv \tilde{h}$. We can use~\eq{8} to prove the following corollary of Theorem~\ref{thm1}.

\begin{corollary}\label{cor5} 
Let~$(G_n)_{n\in\N}$ be a sequence of graph processes such that 
\beq{
\inf_{s\geq 0}\abs{V(G_n(s))}\toinf\quad (n\toinf),
}
and let~$\tilde{h}$ be a random element in~$D([0,\infty),\widetilde{\cal W})$. Then the following are equivalent:
\begin{enumerate}
\item[$(i)$] 
$G_n\wconv \tilde{h}$ as~$n\toinf$.
\item[$(ii)$] 
For all~$d\geq 1$ and all graphs~$F_1,\dots,F_d\in\cF$,
\begin{equation}\label{7c}
\bclr{t^\inj_{F_1}(G_n),\dots,t^\inj_{F_d}(G_n)}\wconv
\bclr{t_{F_1}(\tilde{h}),\dots,t_{F_d}(\tilde{h})} \quad (n\to\infty).
\end{equation}
\item[$(iii)$] 
For every graph~$F\in\cF$, the sequence~$(t^\inj_F(G_n))_{n\in\N}$ is tight and, for all~$d\geq 1$, all real numbers~$0\leq s_1<\dots < s_d<\infty$ where~$\tilde{h}$ is continuous almost surely, and all graphs~$F_1,\dots,F_d\in \cF$,
\beqn{\label{7b}
\lim_{n\toinf}\EE\bclc{t^\inj_{F_1}(G_n(s_1))\cdots t^\inj_{F_d}(G_n(s_d))} = \EE\bclc{t_{F_1}\bclr{\tilde{h}(s_1)}\cdots t_{F_d}\bclr{\tilde{h}(s_d)}}.
}
\end{enumerate}
\end{corollary}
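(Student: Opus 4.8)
The plan is to derive Corollary~\ref{cor5} directly from Theorem~\ref{thm1} applied to the graphon processes $\tilde{h}^n \Def h^{G_n}$, using the estimate~\eq{8} to pass between the injective and the ordinary homomorphism densities. Since each $G_n$ is a graph process, $\tilde{h}^n=h^{G_n}$ is by definition a random element of $D([0,\infty),\widetilde{\cal W})$, so Theorem~\ref{thm1} applies and tells us that $G_n\wconv\tilde{h}$ (that is, $h^{G_n}\wconv\tilde{h}$) is equivalent to the joint weak convergence of $\klr{t_{F_1}(h^{G_n}),\dots,t_{F_d}(h^{G_n})}$ to $\klr{t_{F_1}(\tilde{h}),\dots,t_{F_d}(\tilde{h})}$, and equivalent to tightness of each $\klr{t_F(h^{G_n})}_n$ together with the moment convergence~\eq{7}. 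Recalling the consistency relation $t_F(G_n(s))=t_F\klr{h^{G_n(s)}}$, the only gap between these conditions and parts~$(ii)$ and~$(iii)$ of the corollary is the replacement of $t_F$ by $t^\inj_F$ for the finite graphs $G_n$; the rest of the argument is to close this gap.

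The key observation is that~\eq{8} gives, for every graph $F$ on $k$ vertices and every $s\geq 0$,
\beq{
\babs{t^\inj_F(G_n(s))-t_F(G_n(s))} \leq \frac{C_F}{\abs{V(G_n(s))}} \leq \frac{C_F}{\inf_{u\geq 0}\abs{V(G_n(u))}} \eqqcolon \delta_{F,n},
}
and the hypothesis $\inf_{s\geq0}\abs{V(G_n(s))}\toinf$ forces $\delta_{F,n}\to0$. Hence $\sup_{s\geq0}\babs{t^\inj_F(G_n(s))-t_F(G_n(s))}\to0$ deterministically, i.e.\ the c\`adl\`ag paths $t^\inj_F(G_n)$ and $t_F(G_n)$ differ by a quantity that vanishes uniformly in time. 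Since uniform convergence implies convergence in the Skorohod metric (take the identity time change), the $[0,1]^d$-valued paths $\klr{t^\inj_{F_1}(G_n),\dots,t^\inj_{F_d}(G_n)}$ and $\klr{t_{F_1}(G_n),\dots,t_{F_d}(G_n)}$ are asymptotically equivalent in $D([0,\infty),[0,1]^d)$.

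With this in hand, the two transfers are routine. For part~$(ii)$, the converging-together lemma (see \cite{Billingsley1999}) shows that, of two asymptotically equivalent sequences, one converges weakly if and only if the other does, and then to the same limit; since $t_{F_i}(G_n)=t_{F_i}(h^{G_n})$, the injective convergence~\eq{7c} is thus equivalent to the ordinary convergence~\eq{6a} for $\tilde{h}^n=h^{G_n}$, which is exactly part~$(ii)$ of Theorem~\ref{thm1}. For part~$(iii)$, tightness is preserved under perturbations that vanish uniformly in time (and hence in the Skorohod metric), so $\klr{t^\inj_F(G_n)}_n$ is tight if and only if $\klr{t_F(h^{G_n})}_n$ is; and for the moment condition a telescoping estimate together with $0\leq t^\inj_{F_i},t_{F_i}\leq1$ gives
\beq{
\babs{\EE\bclc{\textstyle\prod_{i=1}^d t^\inj_{F_i}(G_n(s_i))}-\EE\bclc{\textstyle\prod_{i=1}^d t_{F_i}(G_n(s_i))}} \leq \sum_{i=1}^d \delta_{F_i,n}\to0,
}
so~\eq{7b} holds if and only if~\eq{7} does for $h^{G_n}$. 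Combining these equivalences with Theorem~\ref{thm1} yields $(i)\Leftrightarrow(ii)\Leftrightarrow(iii)$.

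There is no deep obstacle here -- the analytic content sits in Theorem~\ref{thm1} -- but the step that genuinely needs the hypothesis, and hence the most care, is the upgrade from the pointwise-in-time bound~\eq{8} (whose constant depends on the instantaneous vertex count) to a bound that is uniform in $s$: it is precisely $\inf_{s\geq0}\abs{V(G_n(s))}\toinf$ that makes $\delta_{F,n}\to0$ and thereby licenses both the Skorohod asymptotic equivalence feeding the converging-together lemma and the tightness transfer. One should also verify the (standard) fact that tightness in the Skorohod space is stable under perturbations vanishing uniformly in time, which is immediate from the bound $d^\circ\klr{t^\inj_F(G_n),t_F(G_n)}\leq\delta_{F,n}$.
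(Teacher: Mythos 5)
Your proposal is correct and follows essentially the same route as the paper's own proof: both reduce the corollary to Theorem~\ref{thm1} via the estimate~\eq{8}, using a Slutsky/converging-together argument for the equivalence of~\eq{6a} and~\eq{7c}, a boundedness argument for the equivalence of~\eq{7} and~\eq{7b}, and stability of tightness under perturbations controlled by~\eq{8} (which the paper justifies by citing \cite[Inequality~(6.3), p.\,122]{Ethier1986} and \cite[Corollary~7.4, p.\,129]{Ethier1986}, exactly the ``standard fact'' you invoke). Your write-up is simply a more explicit version, correctly isolating the role of the hypothesis $\inf_{s\geq0}\abs{V(G_n(s))}\toinf$ in making the bound uniform in time.
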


\begin{proof} 
Equivalence of~\eq{6a} and~\eq{7c} follows from~\eq{8} and Slutsky's Theorem. Equivalence of~\eq{7} and~\eq{7b} is immediate from~\eq{8} and the bounded convergence theorem, whereas equivalence of tightness in~$(iii)$ of Theorem~\ref{thm1} and~$(iii)$ of Corollary~\ref{cor5} is a consequence of~\eq{8}, \cite[Inequality~(6.3), p.\,122]{Ethier1986} and \cite[Corollary~7.4, p.\,129]{Ethier1986}. 
\end{proof}


\subsection{Graphon dynamics}
\label{9}

We will use the population models described in Section~\ref{s:pop} to construct a sequence of random graphs that evolve in time and converge weakly in the space of graphons.

Consider the interval~$[0,1]$ with the Euclidean metric, and let~$\cL = C([0,1],[0,1])$, the space of continuous functions from~$[0,1]$ to~$[0,1]$ endowed with the uniform topology, which we will call the space of \emph{fitness landscapes}.  Note that, in the definition of~$\cL$, the first appearance of~$[0,1]$ represents a space of types, and the second appearance of~$[0,1]$ represents some sort of \emph{fitness} associated with each type through a function from~$\cL$.

Here are some specifications that we will assume throughout this section.

\begin{itemize}
\item[{\bf (R1)}] The connection probabilities between individuals depend on the types and fitness. They will be given by a continuous function~$r\colon\,[0,1]\times [0,1]\rightarrow [0,1]$. 
\item[{\bf (H1)}] The fitness landscape changes dynamically over time. To represent this, for~$m,n \in \N$, we consider~$H^{m,n}$, $H^{m}, H$ to be random elements in~$D([0,\infty), \cL)$, which will be referred to as \emph{fitness landscape processes}. We write~$H^{m}(s)$ for the fitness landscape at time~$s$, and~$H^{m}(s; x)$ for the fitness of type~$x$ at time~$s$.
\end{itemize}

\noindent In what follows, we denote by~$\bar F$ the right-continuous generalised inverse of a distribution function~$F$ with support~$[0,1]$, defined in the usual way as
\begin{equation*}
\bar{F}(u) \Def \inf\{x\in[0,1]\colon\, F(x)>u\}, \qquad u\in [0,1),
\end{equation*} 
and, for convenience, we set~$\bar F(1)\Def\lim_{x\uparrow1} \bar F(x)$.

\paragraph{Wright-Fisher dynamics.}

Our first construction of a graphon dynamics comes from the scaling limit of the multi-type Moran model to the multi-dimensional
Wright-Fisher diffusion. As discussed in Section~\ref{s:pop}, the multi-type Moran model has~$m+1$ types, which are labelled
$0,\ldots,m$. Consider~$n$ individuals, each carrying one of the types from~$\{0,\ldots,m\}$. Let~$\tau_i^{m,n}(s)$ denote the type of individual~$i$ divided by~$m+1$ at time~$ns$; that is, if at time~$ns$ the type of individual~$i$ equals~$k$, where~$0\leq k\leq m$, then~$\tau_i^{m,n}(s)=k/(m+1)$.

\medskip\noindent
\emph{Discrete graphon dynamics:} We can construct the graph~$G^{m,n}(s) \in \cG_n$ at time~$s \geq 0$ by connecting~$i$ and~$j$ if
\begin{equation}\label{12}
U_{ij}^n <  r\big(H^{m,n}(s; \tau_i^{m,n}(s)),H^{m,n}(s; {\tau_j^{m,n}(s)})\big),
\end{equation}
with~$\{U^n_{ij}\colon\, n \in \N,\,1 \leq i < j \leq n\}$ being a collection of independent uniform random variables on~$[0,1]$,
independent of everything else. Thus, $(G^{m,n})_{n \in \N}$ is a sequence of graph processes, which evolve in time due to the induced dynamics of the Moran model on~$m+1$ types. Note that we have scaled time in such a way that the graph~$G^{m,n}(s)$ represents the situation of the underlying Moran model at time~$ns$. 

\begin{theorem}\label{13} Let~$G^{m,n}$ be constructed as above. Assume {\bf (R1)}, {\bf (H1)}, and suppose that~$\clr{Y^{m,n},H^{m,n}}\wconv\clr{Y^{m},H^{m}}$ as~$n\toinf$. Then
\begin{equation*}
G^{m,n}\wconv \tilde h^m\quad(n\toinf),
\end{equation*}
where, for each~$s\geq 0$, the equivalence class~$\tilde h^m(s)$ has a representative~$h^m(s)$ of the form
\begin{equation}\label{14}
h^m(s;x,y) = r\bclr{H^m (s;\bar{F}^m(s;x) ), H^m (s;\bar{F}^m(s;y) )},  
\qquad (x,y) \in [0,1]^2.
\end{equation} 
\end{theorem}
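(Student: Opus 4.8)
The plan is to verify condition $(iii)$ of Corollary~\ref{cor5}, i.e.\ tightness of $(t^\inj_F(G^{m,n}))_n$ for each fixed $F\in\cF$ together with convergence of the mixed moments \eqref{7b} at the continuity times of $\tilde h^m$. Everything is driven by conditioning on the population model: let $\mathcal A_n$ denote the $\sigma$-algebra generated by the Moran trajectory $(\tau^{m,n}_i)_i$ and the fitness process $H^{m,n}$. By construction \eqref{12}, given $\mathcal A_n$ the edges of $G^{m,n}(s)$ are \emph{conditionally independent} Bernoulli variables, with $\{i,j\}$ present at time $s$ with probability $p^{s}_{ij}=r\bclr{H^{m,n}(s;\tau^{m,n}_i(s)),H^{m,n}(s;\tau^{m,n}_j(s))}$. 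The limiting object is the bounded continuous functional $\Psi_F\colon\cS^m\times\cL\to[0,1]$ given by $\Psi_F(y,\eta)=\sum_{\ell_1,\dots,\ell_k=0}^m\bclr{\prod_{a=1}^k y_{\ell_a}}\prod_{\{a,b\}\in E(F)}r\bclr{\eta(\ell_a/(m+1)),\eta(\ell_b/(m+1))}$, where $k=\abs{V(F)}$; this is a polynomial in $y$ and, since $r$ and the evaluations $\eta\mapsto\eta(\ell/(m+1))$ are continuous, is jointly continuous.

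\emph{Moments.} Fix $s_1<\dots<s_d$ and $F_1,\dots,F_d$ with $k_c=\abs{V(F_c)}$, expand each $t^\inj_{F_c}(G^{m,n}(s_c))$ over injective homomorphisms $\phi_c$, and take $\EE[\,\cdot\mid\mathcal A_n]$. By conditional edge-independence the expectation of the product of indicators equals $\prod_c\prod_{\{a,b\}\in E(F_c)}p^{s_c}_{\phi_c(a)\phi_c(b)}$ whenever all edge-images are pairwise distinct; summing this product form over all $(\phi_1,\dots,\phi_d)$ factorises exactly as $\prod_c g^n_c$ with $g^n_c=n_{(k_c)}^{-1}\sum_{\phi_c}\prod_{\{a,b\}}p^{s_c}_{\phi_c(a)\phi_c(b)}$. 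The tuples with a coincidence among edge-images force two cross-identifications of vertices, number $\bigo(n^{K-2})$ with $K=\sum_c k_c$, and hence perturb this only by $\bigo(1/n)$. By \eqref{8} and factorisation of the unrestricted homomorphism sum over vertices against the empirical type measure $n^{-1}\sum_i\delta_{\tau^{m,n}_i(s_c)}=\sum_\ell Y^{m,n}_\ell(s_c)\delta_{\ell/(m+1)}$, one gets $g^n_c=\Psi_{F_c}(Y^{m,n}(s_c),H^{m,n}(s_c))+\bigo(1/n)$, so $\EE\bclr{\prod_c t^\inj_{F_c}(G^{m,n}(s_c))}=\EE\bclr{\prod_c\Psi_{F_c}(Y^{m,n}(s_c),H^{m,n}(s_c))}+\bigo(1/n)$. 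On the limit side, $\bar F^m(s;\cdot)$ pushes Lebesgue measure onto $\sum_\ell Y^m_\ell(s)\delta_{\ell/(m+1)}$, so the change of variables $x_a\mapsto\bar F^m(s;x_a)$ in \eqref{2} turns the representative \eqref{14} into $t_F(\tilde h^m(s))=\Psi_F(Y^m(s),H^m(s))$. Since $\Psi_F$ is continuous, composition is continuous for the Skorohod topology, so the hypothesis $(Y^{m,n},H^{m,n})\wconv(Y^m,H^m)$ and the continuous mapping theorem give $\bclr{\Psi_{F_c}\circ(Y^{m,n},H^{m,n})}_c\wconv\bclr{\Psi_{F_c}\circ(Y^m,H^m)}_c$ in $D$; extracting finite-dimensional distributions at the continuity times of $s\mapsto t_F(\tilde h^m(s))$ (which are exactly the continuity times of $\tilde h^m$) and using boundedness yields \eqref{7b}.

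\emph{Tightness.} Decompose $t^\inj_F(G^{m,n}(s))=\Psi_F(Y^{m,n}(s),H^{m,n}(s))+M^n(s)+\bigo(1/n)$, where $M^n(s)=t^\inj_F(G^{m,n}(s))-\EE[t^\inj_F(G^{m,n}(s))\mid\mathcal A_n]$. The annealed part is tight by the continuous-mapping argument above, and $\bigo(1/n)$ is uniform, so tightness of $t^\inj_F(G^{m,n})$ reduces to $M^n\wconv0$ in $D$; for this it suffices that $\sup_{s\le T}\abs{M^n(s)}\to0$ in probability for every $T$. Pointwise this is easy: conditionally on $\mathcal A_n$ two homomorphisms have nonzero covariance only if they share an edge, and counting such pairs gives $\EE[M^n(s)^2]=\bigo(1/n^2)$ for each fixed $s$.

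\emph{The main obstacle} is upgrading this pointwise smallness to uniform-in-time control. The difficulty is that the edge-variables $\{U^n_{ij}\}$ are frozen in time while the thresholds $p^s_{ij}$ in \eqref{12} move with the dynamics; on $[0,T]$ the graph undergoes on the order of $n^2$ resampling events, so $t^\inj_F(G^{m,n})$ has total variation diverging with $n$, and any bound on $M^n$ through total variation or a naive grid-plus-oscillation estimate fails — the cancellation inherent in the centred subgraph count must be exploited. I would instead verify Aldous's criterion for $M^n$ by bounding the conditional second moment of an increment $M^n(\tau+\theta)-M^n(\tau)$ over a stopping time $\tau\le T$ and small lag $\theta$: only homomorphisms carrying an edge that actually flips in $[\tau,\tau+\theta]$ contribute, the number of flips is controlled by the number of resampling events in the interval (of order $n^2\theta$) times the $\bigo(1/n^2)$ effect of a single edge-flip on $t^\inj_F$, and conditional independence again kills cross terms between homomorphisms with no common flipping edge. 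Making this increment bound $o(1)$ as $\theta\downarrow0$ uniformly in $n$ is the technical crux; once it holds, $M^n\wconv0$, the decomposition transfers tightness (indeed convergence) to $t^\inj_F(G^{m,n})$, and Corollary~\ref{cor5} gives $G^{m,n}\wconv\tilde h^m$.
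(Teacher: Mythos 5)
Your reduction to Corollary~\ref{cor5}$(iii)$ and your moment computation are sound, and in substance they match Step~1 of the paper's proof: the paper arrives at the same identification of the conditional mean with $\Psi_F\bclr{Y^{m,n}(s),H^{m,n}(s)}+\bigo(1/n)$ (compare \eqref{25}--\eqref{27}), but finishes via Skorokhod embedding, McDiarmid's inequality and Borel--Cantelli rather than your direct expansion plus the continuous mapping theorem; either route yields \eqref{7b}. The genuine gap is exactly where you flag it: the uniform-in-time control of $M^n$, and your sketched resolution cannot be closed as written. First, the counting is off: a resampling event re-types one vertex and therefore can flip up to $n-1$ edges, not one, so the number of edge flips in $[\tau,\tau+\theta]$ is of order $n^3\theta$, and multiplying by the $\bigo(1/n^2)$ per-flip effect gives $\bigo(n\theta)$ --- precisely the divergent total-variation bound you yourself declare inadmissible. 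Second, and more fundamentally, second-moment bounds are structurally too weak here. Your estimate $\EE\{M^n(s)^2\}=\bigo(n^{-2})$ is proved at \emph{deterministic} times, where conditioning on $\mathcal{A}_n$ makes the edge indicators independent; Aldous's criterion, however, quantifies over \emph{stopping times} $\tau$, which are functions of the $U^n_{ij}$ and can locate short-lived excursions of $M^n$. Once $\tau$ is random in this way, the conditional-independence computation that kills cross terms is unavailable, and Chebyshev at $\tau$ does not follow from the fixed-time variance bound. (A toy example: a process making excursions of height $1$ and duration $n^{-4}$ at rate $n^2$ has all fixed-time and fixed-lag second moments of order $n^{-2}$, yet its supremum over $[0,T]$ equals $1$ with high probability; stopping-time increments detect this, fixed-time ones do not.)

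The idea you are missing is the paper's Step~2: replace polynomial (variance) tails by \emph{exponential} tails, and replace stopping times by a union bound over an $\mathcal{A}_n$-measurable, polynomially large set of candidate jump times. Conditionally on $\mathcal{A}_n$, the process $t^\inj_F(G^{m,n})$ on $[0,T]$ can change only at resampling events; their times are determined by $\mathcal{A}_n$ and number at most $n^3$ with probability $1-\bigo(n^{-2})$, by the Poisson estimate \eqref{27a}. At each such (conditionally deterministic) time, McDiarmid's inequality \eqref{24} bounds the deviation of $t^\inj_F(G^{m,n}(s))$ from its conditional mean $V^{m,n}(s)$ by $2\exp\bclr{-2\eps^2\binom{n}{2}\binom{k}{2}^{-2}}$, and this exponential tail survives the union bound: taking $\eps\asymp\sqrt{\log n}/n$ yields $\sup_{0\le s\le T}\babs{t^\inj_F(G^{m,n}(s))-V^{m,n}(s)}\le k\log n/n$ with probability $1-\bigo(1/n)$, which is \eqref{27b}. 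That is exactly your missing statement $\sup_{s\le T}\abs{M^n(s)}\to0$ in probability, after which tightness of the annealed part (your $\Psi_F(Y^{m,n},H^{m,n})$, the paper's $W^{m,n}$ and $V^{m,n}$; this part of your argument is fine) transfers to $t^\inj_F(G^{m,n})$. In short: exponential concentration at fixed times, combined with piecewise constancy along polynomially many $\mathcal{A}_n$-measurable jump times, is what converts pointwise smallness into uniform smallness; second moments and Aldous's criterion alone will not.
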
 

While the space of types~$[0,1]$ allows for uncountably many types for convenience, in the example of the~$(m+1)$-Moran model, we will embed the~$m+1$ types into~$[0,1]$ in a canonical way, using the functions~$\tau_i^{m,n}(s)$ For both the type space and the fitness space, the interval~$[0,1]$ could be replaced by any other Polish space, as is the case for graphons.

In~$(G^{m,n})_{n \in \N}$, the probability for two vertices to be connected depends on their respective fitness, which in the case
of~\eq{12} depends on their types.  The limiting graphon is an~$(m+1) \times (m+1)$-block graphon whose block boundaries move along the diagonal according to the Wright-Fisher diffusion and whose block heights are controlled by the fitness landscape. As observed for the case~$m=1$ in Section~\ref{intro}, it is not too hard to see that indeed the limiting graphon is a diffusion on the graphon space.


\paragraph{Fleming-Viot dynamics.}

In the above dynamics the resulting graphon process falls in the class of stochastic block models. Our second theorem provides a graphon dynamics that arises from the Fleming-Viot diffusion and has no block structure in the limit.

\begin{theorem}\label{17} Assume {\bf (R1)}, {\bf (H1)}, and suppose that  $(Z^m,H^m)\wconv(Z,H)$ as~$m\toinf$. Let~$\tilde{h}^m$ be defined through its representative~$h^m$ as given in~\eq{14}.  Then
\begin{equation*}
\tilde h^m \wconv \tilde h \quad(m\toinf),
\end{equation*}
where, for each~$t\geq 0$, the equivalence class~$\tilde h(s)$ has a representative~$h(s)$ of the form
\begin{equation}\label{14b}
h(s;x,y) = r\bclr{H (s; \bar{F}(s;x) ), H (s;\bar{F}(s;y))},  
\qquad (x,y) \in [0,1]^2.
\end{equation}
\end{theorem}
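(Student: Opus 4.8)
The plan is to deduce the convergence of the graphon processes from the convergence of their subgraph densities via Theorem~\ref{thm1}, and to obtain the latter from the assumed convergence~$(Z^m,H^m)\wconv(Z,H)$ by a continuous-mapping argument. By the equivalence of~$(i)$ and~$(ii)$ in Theorem~\ref{thm1}, it suffices to prove that for every~$d\geq1$ and all~$F_1,\dots,F_d\in\cF$,
\be{pp17goal}
\bclr{t_{F_1}(\tilde h^m),\dots,t_{F_d}(\tilde h^m)}\wconv\bclr{t_{F_1}(\tilde h),\dots,t_{F_d}(\tilde h)}\quad(m\toinf)
\ee
as processes in~$D([0,\infty),[0,1]^d)$.

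First I would rewrite each subgraph density so as to remove the generalised inverse~$\bar F^m$. Fix~$F$ on~$k$ vertices. Because~$F^m(s;\cdot)$ is the distribution function of~$Z^m(s)$, the quantile transform shows that~$\bar F^m(s;\cdot)$ pushes Lebesgue measure on~$[0,1]$ forward to~$Z^m(s)$. Substituting~$\xi_\ell=\bar F^m(s;x_\ell)$ coordinatewise in~\eq{2} applied to~\eq{14} then gives, for every~$s\geq0$,
\beq{
t_F(\tilde h^m(s)) = \int_{[0,1]^k}\prod_{\{i,j\}\in E(F)} r\bclr{H^m(s;\xi_i),H^m(s;\xi_j)}\,\prod_{\ell=1}^k Z^m(s;d\xi_\ell),
}
with the analogous identity for~$t_F(\tilde h(s))$ obtained by replacing~$(Z^m,H^m)$ by~$(Z,H)$. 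This representation is independent of the chosen representative graphon and exhibits~$t_F(\tilde h^m(s))$ as a \emph{fixed} functional of the pair~$(Z^m(s),H^m(s))$ alone.

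Accordingly, define~$\Phi_F\colon\cP([0,1])\times\cL\to[0,1]$ by
\beq{
\Phi_F(\mu,\phi)\Def\int_{[0,1]^k}\prod_{\{i,j\}\in E(F)} r\bclr{\phi(\xi_i),\phi(\xi_j)}\,\prod_{\ell=1}^k\mu(d\xi_\ell),
}
so that~$t_F(\tilde h^m(s))=\Phi_F(Z^m(s),H^m(s))$ and~$t_F(\tilde h(s))=\Phi_F(Z(s),H(s))$. I claim~$\Phi_F$ is jointly continuous. Indeed, if~$(\mu^p,\phi^p)\to(\mu,\phi)$ in~$\cP([0,1])\times\cL$, then uniform continuity of~$r$ on the compact square~$[0,1]^2$ together with~$\norm{\phi^p-\phi}_\infty\to0$ forces the bounded continuous integrands to converge uniformly, while the weak convergence~$\mu^p\to\mu$ gives~$(\mu^p)^{\otimes k}\to\mu^{\otimes k}$ weakly; combining the two estimates yields~$\Phi_F(\mu^p,\phi^p)\to\Phi_F(\mu,\phi)$. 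Hence~$\Phi\Def(\Phi_{F_1},\dots,\Phi_{F_d})\colon\cP([0,1])\times\cL\to[0,1]^d$ is continuous.

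It remains to lift this to path space. Since c\`adl\`ag paths have relatively compact range on compact time intervals, composition with a continuous map is continuous for the Skorohod topology, so~$\Phi$ induces a continuous map~$D([0,\infty),\cP([0,1])\times\cL)\to D([0,\infty),[0,1]^d)$ sending~$(Z,H)$ to~$\bclr{s\mapsto\Phi(Z(s),H(s))}$. Applying the continuous-mapping theorem to the hypothesis~$(Z^m,H^m)\wconv(Z,H)$ gives exactly~\eq{pp17goal}, and Theorem~\ref{thm1} then yields~$\tilde h^m\wconv\tilde h$. The main obstacle is the very first manoeuvre: the representative~\eq{14} is built from the generalised inverse~$\bar F^m$, and the map carrying a distribution function to its generalised inverse fails to be continuous at distributions with flat stretches or atoms, so passing to the limit directly in~\eq{14} is hopeless. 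The change of variables to the measure representation against~$Z^m(s)^{\otimes k}$ is precisely what circumvents this, after which the remaining continuity of~$\Phi_F$ and the composition argument are routine.
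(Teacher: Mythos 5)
Your proof is correct, and its mathematical core is the same as the paper's: your functional $\Phi_F$ is exactly the map $\lambda(\mu,f)=\int_{[0,1]^k}\prod_{\{i,j\}\in E(F)}r(f(x_i),f(x_j))\,\mu(dx_1)\cdots\mu(dx_k)$ introduced in the paper's proof, your change-of-variables identity is the change of reference measure the paper records immediately after the statement of Theorem~\ref{17}, and your continuity argument for $\Phi_F$ (uniform continuity of $r$ for the function coordinate, weak convergence of $k$-fold product measures for the measure coordinate) is essentially verbatim the paper's continuity proof for $\lambda$. The difference lies in how the conclusion is extracted from Theorem~\ref{thm1}: the paper verifies criterion $(iii)$ --- it invokes a Skorokhod embedding to make $(Z^m,H^m)\to(Z,H)$ almost sure, deduces $t_F(\tilde h^m(s))\to t_F(\tilde h(s))$ at points of almost-sure continuity, obtains the moment condition \eq{7} by bounded convergence, and then proves tightness separately via continuity of the composition map on path space --- whereas you verify criterion $(ii)$ in a single application of the continuous mapping theorem, using that same path-space composition fact. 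Your route is the more economical one (no embedding, no moment computation), and it is available precisely because $\tilde h^m$ is a deterministic continuous functional of $(Z^m,H^m)$; by contrast, in Theorem~\ref{13} the extra edge-sampling randomness genuinely requires the concentration-plus-moments-plus-tightness route, and the paper's proof of Theorem~\ref{17} simply mirrors that structure. Your closing remark about why one cannot pass to the limit directly in \eq{14} --- discontinuity of the generalised inverse at distribution functions with atoms or flat stretches --- correctly identifies the reason both you and the paper work with the measure representation of the subgraph densities rather than with the representative graphon itself.
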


Our~$H$ can be any fitness landscape process in {\bf (H1)} and~$r$ any graphon in {\bf (R1)}. We will show in Example~\ref{exam3} below that these allow for a general class of graphons beyond the stochastic block model process observed earlier.

Also note that the appearance of the inverse distribution function in~\eq{14}, respectively~\eq{14b}, can be understood as a change of reference measure of the vertex space from uniform to~$Z^m$, respectively~$Z$. For example, with~$h^m$ as in~\eq{14}, subgraph densities with respect to this graphon can be written as
\beqs{
t_F(h^m(s)) & = \int_{[0,1]^k} \prod_{\{i,j\}\in E(F)}h^m(s;x_i,x_j)\, \dd x_1\dots\dd x_k \\
& = \int_{[0,1]^k} \prod_{\{i,j\}\in E(F)}r(H^m(s,x_i),H^m(s,x_j))\, Z^m(s;\dd x_1)\cdots Z^m(s;\dd x_k),
}
and similarly for~$ t_F(h(s))$ with~$h$ as in~\eq{14b}. This effect was already observed by \cite{Athreya2016} in the context of Respondent Driven Sampling.


\section{Examples and Remarks}
\label{ex1}

In this section we illustrate the abstract results in Section~\ref{5} via concrete examples of type-connection graphons and fitness landscapes. 


\subsection{Three examples}

\begin{example}\label{exam1}
Let
\begin{equation*}
H^{m,n}(s;u) =H^m(s;u)=H(s;u) =u. 
\end{equation*}
It follows that~$G^{m,n}(s) \in \cG_n$ is the random graph in which, at time~$s$, individuals~$i$ and~$j$ are connected by an edge with probability~$r\bclr{\tau^{m,n}_i(s),\tau^{m,n}_j(s)}$, where again~$\tau^{m,n}_i(s)$ denotes the type of individual~$i$ divided by~$m$ at time~$ns$. By Theorem~\ref{13}, as~$n \to \infty$, the graph process~$G^{m,n}$ converges to~$\tilde{h}^m(s)$ given by~\eq{14}. Thus, individuals~$i$ and~$j$ are connected with a probability that \emph{does not} depend on the rest of the population. In this case, $r$ will only ever be evaluated at the points~$\{0,\tfrac{1}{m+1},\dots,\tfrac{m}{m+1}\}\times\{0,\tfrac{1}{m+1},\dots,\tfrac{m}{m+1}\}$, an so~$r$ can be viewed as an~$(m+1)\times(m+1)$ type-connection matrix~$(r_{ij})_{0\leq i,j\leq m}$ (the matrix can be extended to a function on~$[0,1]^2$ by an arbitrary continuous interpolation). This leads to the representation
\beq{
 h^m(s;x,y) = r_{(m+1)\bar F^m(s;x),(m+1)\bar F^m(s;y)}.
}
In particular, for the case~$m=1$ (two types), the graphon~$r$ can, without loss of generality, be assumed to be given by a~$(2 \times 2)$-matrix, namely,
\begin{equation*}
  r  = \left[\begin{array}{cc}\alpha & \delta \\ \delta&\beta\end{array}\right],
\end{equation*}
which yields the representation
\begin{equation*}
h^1(s;x,y)
=\begin{cases}
\alpha &\text{if~$(x,y) = [0,Y_0^1(s)) \times [0,Y_0^1(s))$,}\\[.5ex]
\beta &\text{if~$(x,y) = [Y_0^1(s),1] \times [Y_0^1(s),1]$,}\\[.5ex]
\delta &\text{if~$(x,y) =  [0,Y_0^1(s)) \times [Y_0^1(s),1]$,}\\[.5ex]
\delta &\text{if~$(x,y) =  [Y_0^1(s),1] \times [0,Y_0^1(s))$.}
\end{cases}
\end{equation*}
We have convergence to a graphon-valued Markov process that is driven by~$Y_0^1$, as illustrated in Figure~\ref{fig2}.  \hfill$\square$

\def\stk#1#2#3{\begin{tikzpicture}[scale=0.35]
	\def\s{#1}
	\def\sm{#2}
	\coordinate (e1) at (10,0);
	\coordinate (e2) at (0,10);
	\coordinate (u1) at ($(e1)+(e2)$);
	\coordinate (u2) at ($\s*(e1) + \sm*(e2)$);
	\coordinate (u3) at ($\s*(e2) + \sm*(e1)$);
	\draw (0,0) -- (e1) -- ($(e1) + (e2)$) -- (e2) -- (0,0);
	\draw ($\s*(e2)$) -- ($\s*(e2) + (e1)$);
	\draw ($\s*(e1)$) -- ($\s*(e1) + (e2)$);
	\draw ($.5*\s*(u1)$) node {$\alpha$};
	\draw ($\s*(u1) + .5*\sm*(u1)$) node {$\beta$};
	\draw ($\s*(e2) + .5*(u2)$) node {$\delta$};
	\draw ($\s*(e1) + .5*(u3)$) node {$\delta$};
	\draw ($\s*(e1)$) node[yshift=-13pt] {#3};
\end{tikzpicture}
}
\def\stka#1#2#3{\begin{tikzpicture}[scale=0.35]
	\def\s{#1}
	\def\sm{#2}
	\coordinate (e1) at (10,0);
	\coordinate (e2) at (0,10);
 	\coordinate (u1) at ($(e1)+(e2)$);
	\coordinate (u2) at ($\s*(e1) + \sm*(e2)$);
	\coordinate (u3) at ($\s*(e2) + \sm*(e1)$);
	\draw (0,0) -- (e1) -- ($(e1) + (e2)$) -- (e2) -- (0,0);
	\draw ($\s*(e2)$) -- ($\s*(e2) + (e1)$);
	\draw ($\s*(e1)$) -- ($\s*(e1) + (e2)$);
	\draw ($.5*\s*(u1)$) node {$\alpha$};
	\draw ($\s*(u1) + .5*\sm*(u1)$) node {$\beta$};
	\draw ($\s*(e2) + .5*(u2)$) node {$\delta$};
	\draw ($\s*(e1) + .5*(u3)$) node {$\delta$};
	\draw ($\s*(e1)$) node[yshift=-13pt] {#3};
\end{tikzpicture}
}

\begin{figure}
\centering
\stka{.7}{.3}{ $Y^1_0(s)$}
\enskip
\caption{\label{fig2} 
Graphical representation~$h^1$, with~$Y^1_0(s)$ the fraction of Type~$0$ in the population at time~$s$.}
\end{figure}
\end{example}

\hspace{0.4cm}
In the above example we clearly see that, while the proportions of types are diffusive, the connection probabilities within and between types are constant. We next present an example where the connection probabilities diffuse as well.

\begin{example}\label{exam2}
Let~$r(u,v) = uv$, and define
\begin{equation*}
H^{m,n}\big(s;\tfrac{\ell}{m+1}\big) = Y^{m,n}_\ell(s), 
\quad H^m\big(s;\tfrac{\ell}{m+1}\big) = Y^{m}_\ell(s), \quad 0\leq\ell\leq m,
\end{equation*}
and, for~$x\neq \tfrac{\ell}{m+1}$, $0\leq\ell\leq m$, define~$H^{m,n}(s;x)$ and~$H^{m}(s;x)$ via linear interpolation. By Theorem~\ref{13}, as~$n\to\infty$ the graph process~$G^{m,n}$ converges to~$\tilde{h}^m$ given by its representative
\begin{equation*}
h^m (s;x,y) =  Y^m_j(s)  Y^{m}_\ell(s),
\end{equation*}
where~$j$ and~$\ell$ are such that
\beq{
\sum_{i=0}^{j-1} Y^{m}_i(s) \leq x < \sum_{i=0}^{j} Y^{m}_i(s),
\qquad
\sum_{i=0}^{\ell-1} Y^{m}_i(s) \leq y < \sum_{i=0}^{\ell} Y^{m}_i(s)
}
(for~$x=1$, respectively, $y=1$, we just set~$j=m$, respectively, $\ell=m$).
\hfill$\square$
\end{example}

\hspace{0.4cm}
If we let~$m\rightarrow \infty$ in the above example, then~$h^m (s)$ converges to the zero graphon. Consequently, we need to do adapt the fitness landscape in order to obtain a non-trivial limit. 

\begin{example}\label{exam3}
Let us consider $r$ as above, namely, 
\begin{equation*}
r(u,v) = uv.
\end{equation*} 
Consider a suitable \emph{mutual fitness function}, given by a continuous function $f \in \cW$, and let $c \in (0,1)$ be the \emph{fitness threshold}. For each $m \in \N$ and $0 \leq \ell \leq m$, define the mutual fitness partner sets as
\begin{equation*}
A_\ell^m = \left \{\tfrac{j}{m+1}\colon\, f\left(\tfrac{\ell}{m+1}, \tfrac{j}{m+1}\right) \geq c \right\}
\end{equation*}
and define the fitness landscape process as
\begin{equation*}
H^m\big(s;\tfrac{\ell}{m+1}\big) = \sum_{k \in A^m_\ell}Y^{m}_k(s), \quad 0\leq\ell\leq m.
\end{equation*}
Thus, individuals~$i$ and~$j$ are connected with probability
\begin{align*}
r\Big(H^m\big(s; \tfrac{i}{m+1}\big), H^m\big(s; \tfrac{j}{m+1}\big)\Big) 
= H^m\big(s; \tfrac{i}{m+1}\big)\, H^m\big(s;\tfrac{j}{m+1}\big).
\end{align*}
We note that, unlike in the earlier example, this probability depends on the rest of the population via the \emph{total fractions} of all individuals whose type has a \emph{sufficiently large mutual fitness} with respect to individuals of type ~$i$, respectively, type $j$. For the graphon process we may define
\begin{equation*}
H^m(s; u) = \int_{[0,1]}\I\big[f\big(\bar F^m(s;u),\bar F^m(s;v)\big) \geq c\big] Z^m(s;\dd v), \qquad u \in (0,1),
\end{equation*}
and define the corresponding graphon process~$\tilde{h}^m$ to be given by its representative
\begin{equation*}
h^m (s;x,y) =  H^m(s,x)\,H^m(s,y), \qquad x,y, \in (0,1).
\end{equation*}
It is easy to see that the $H^m$ satisfy \textbf{(H1)} with
\begin{equation*}
H(s;u) =\int_{[0,1]}\I\big[f\big(\bar F(s;u),\bar F(s;v)\big) \geq c\big] Z(s;\dd v).
\end{equation*}
By Theorem \ref{17}, we obtain that the graphon process ~$\tilde{h}^m$ converges as $m \rightarrow \infty$ to $\tilde{h}$, which is given by its representative
\begin{equation*}
h(s;x,y) =  H(s,x)H(s,y), \qquad x,y, \in (0,1).
\end{equation*}
Here, $r$ can be viewed as a type-connection matrix or type connection kernel that is controlled by the empirical type distribution of the entire population.

Note that $r$ need not take the product form, as illustrated by the fact that any $r \in \cW$ satisfying \textbf{(R1)} will suffice. Thus, the example's framework provides a rich class of non-trival diffusions in the space of graphons, well beyond the stochastic block model framework. \hfill$\square$
\end{example}


\subsection{Remarks}
\label{rm1}

The Moran model, the Wright-Fisher diffusion and the Fleming-Viot diffusion were used to create the graphon dynamics constructed in Theorems~\ref{13} and~\ref{17}. We mention three generalisations of these models where our results carry over.

\newcounter{myitemcounter}
\begin{enumerate}[label=(\alph*)]
\item 
In Section~\ref{intro}, we discussed the Moran model with~$n$ individuals, each carrying Type~$0$ or Type~$1$, and we identified the limiting process as the Wright-Fisher diffusion on~$[0,1]$ given by the SDE
\beq{
\dd Y(s) = \sqrt{Y(s)(1-Y(s))}\,\dd W(s)
}
with initial condition~$Y(0)$, where~$W$ is standard Brownian motion. This SDE has a unique strong solution, that is, there is a unique path~$s \mapsto Y(s)$ that is measurable with respect to the canonical filtration associated with the Brownian motion. The generator~$L$ of the Wright-Fisher diffusion is 
\begin{equation*}
(L\phi)(x) = x(1-x)\,\frac{\partial^2\phi(x)}{\partial x^2}, \qquad x \in [0,1],
\end{equation*}
for test functions~$\phi\colon\, [0,1] \to \R$ that are twice continuously differentiable. It is possible to generalise the Wright-Fisher diffusion by allowing for a \emph{state-dependent resampling rate}. Indeed, if individuals resample at rate~$s(x)$ when the fraction of individuals of type~$0$ is~$x$, then the generator becomes
\begin{equation*}
(L^s\phi)(x) = s(x)\, x(1-x)\frac{\partial^2\phi(x)}{\partial x^2}, \qquad x \in [0,1].
\end{equation*} 
In order for the SDE to be well-defined, some restrictions need to be imposed on the function~$s$, for instance, $s(x) > 0$ for~$x \in (0,1)$ and~$x \mapsto s(x)\,x(1-x)$ is Lipschitz on~$[0,1]$ (see \cite[Section 2]{Dawson1993}). An example is~$s(x)=x(1-x)$, which corresponds to Ohta-Kimura resampling at a rate that is proportional to the genetic diversity of the population. 

\item 
The Moran model with~$m+1$ types is the Markov process on the simplex~$\cS^m$ with generator
\begin{equation*}
(L^m\phi)(x) = \sum_{0 \leq k < \ell \leq m} x_k x_\ell\,\,\frac{\partial^2\phi(x)}{\partial x_k \partial x_\ell}, 
\qquad x \in \cS^m,
\end{equation*}
for test functions~$\phi\colon\,\cS^m \to \R$ that are twice continuously differentiable in each coordinate. Again, it is possible to modulate the resampling rate by a function~$s\colon\, \cS^m \to [0,\infty)$, but \emph{severe restrictions} need to be imposed on the behaviour of~$s$ near the boundary of~$\cS^m$ (for example, $s$ is close to~$1$; see \cite{Bass2008}).

\item
The Fleming-Viot diffusion with infinitely many types is the Markov process on the set~$\cP([0,1])$ with generator
\begin{equation*}
(L\phi)(x) = \int_{[0,1]^2}  
\big[x(\dd u)\delta_u(\dd v)- x(\dd u)x(\dd v)\big]\, \frac{\partial^2\phi(x)}{\partial x^2}[\delta_u,\delta_v], 
\end{equation*}
where
\begin{equation*}
\frac{\partial^2\phi(x)}{\partial x^2}[\delta_u,\delta_v]
= \frac{\partial}{\partial x}\left(\frac{\partial\phi(x)}{\partial x}[\delta_u]\right)[\delta_v],
\qquad u,v \in [0,1],
\end{equation*}
for test functions~$\phi\colon\, \cP([0,1]) \to \R$ of the form
\begin{equation*}
\phi(x) = \int_{[0,1]^n} x(\dd u_1) \times \cdots \times x(\dd u_n)\,\psi\big(u_1,\ldots,u_n\big),
\end{equation*}
where~$x  \in \cP([0,1])$, $n \in \N$, and~$\psi\colon\,[0,1]^n\to\R$ is continuous. Again, it is possible to modulate the resampling rate by a function~$s\colon\, \cP([0,1]) \to [0,\infty)$, but \emph{severe restrictions} need to be imposed on~$s$ in order to make sure that the diffusion is well-defined (for example, $s$ is close to~$1$; see \cite{Dawson1995}).
\end{enumerate}


\noindent The limiting graphons dynamics in Theorems~\ref{13} and~\ref{17} are Markov  processes on the space of graphons~$(\widetilde{\cal W},\delta_{\sub})$. Indeed, because
\begin{equation*}
\tilde h(s) = G(Y(s)), \qquad s \geq 0,
\end{equation*}
for some invertible map~$G\colon\, \cS^m \to \widetilde{\cal W}$ or~$G\colon\, \cP([0,1]) \to \widetilde{\cal W}$, the generator~$\tilde L$ of the graphon dynamics can be \emph{formally} computed as
\begin{align*}
(\tilde L\phi)\bclr{\tilde h(0)} 
&= \lim_{t \downarrow 0} \frac{\EE\bclc{\phi\bclr{\tilde h(s)} - \phi\bclr{\tilde h(0)} \given \tilde h(0)}}{s}\\
&= \lim_{t \downarrow 0} \frac{\EE\bclc{(\phi \circ G)(Y(s)) - (\phi \circ G) (Y(0)) \given Y(0)}}{y}\\
&= L(\phi \circ G)(Y(0)) = L(\phi\circ G)\bclr{G^{-1}\bclr{\tilde h(0)}},
\end{align*}
for appropriate test functions~$\phi\colon\, \widetilde{\cal W} \to \R$. Hence we have the representation
\begin{equation*}
\tilde L \phi = L(\phi \circ G) \circ G^{-1}.
\end{equation*}
An explicit form of the generator may be possible for specific choices of~$G$.


\section{Proof of main results}
\label{18}

\subsection{Proof of Theorem~\ref{thm1}} 

\begin{proof}
``$(i)\Longrightarrow(ii)$''. Since~$t_F$ is a continuous function from~$(\widetilde{\cal W},\dsub)$ to~$[0,1]$, and thus continuous from~$D([0,\infty),\widetilde{\cal W})$ to~$D([0,\infty),[0,1])$ by \cite[Problem~13, p.\,151]{Ethier1986}, it is clear that~$(i)$ implies~$(ii)$ by the continuous mapping theorem. 

\smallskip\noindent
``$(ii)\Longrightarrow(iii)$''. Now, observe that~$(ii)$ for~$k=1$ implies tightness of~$(t_F(\tilde{h}_n))_{n\in\N}$, which is the first part of~$(iii)$. For points of continuity of~$(t_{F_1}(\tilde{h}),\dots,t_{F_k}(\tilde{h}))$, $(ii)$ implies that the finite dimensional distributions converge which implies the second part of~$(iii)$. 

\smallskip\noindent 
``$(iii)\Longrightarrow(i)$''. Note first that functions of the form
\beqn{\label{19}
f=\sum_{i=1}^m a_i t_{F_i}, \qquad \text{$m\in\N$, $a_1,\dots,a_m\in\R$, $F_1,\dots, F_n\in \cF$,}
}
form an algebra because, for any two graphs~$F_1$ and~$F_2$, 
\beq{
t_{F_1}(h) t_{F_2}(h) = t_{F_1\uplus F_2}(h), 
\qquad\text{$h\in\wt\cW$, $F_1,F_2\in\cF$},
}
where~${F_1\uplus F_2}$ denotes the disjoint union of the two graphs (that is, no merging of vertices), which is again in~$\cF$. By \cite[Theorem~2.2]{Diao2015}, this set of functions is dense in the space of continuous functions on~$\widetilde{\cal W}$ with respect to the topology of uniform convergence (the cited theorem is with respect to a metric that is equivalent to~$d_{\sub}$ with respect to convergence). Since, by assumption, all~$(t_{F_i}(\tilde{h}_n))_{n\in\N}$ are relatively compact, so are finite collections of them, and therefore so are~$(f(\tilde{h}_n))_{n\in\N}$ for all~$f$ of the form~\eq{19}, which implies relative compactness of~$(\tilde{h}_n)_{n\in\N}$ by \cite[Theorem 9.1, p.\,142]{Ethier1986} (the \emph{compact containment condition} in that theorem trivially holds because~$\widetilde{\cal W}$ itself is compact). 

In order to establish convergence of the finite-dimensional distributions of~$\tilde{h}_n$ to~$\tilde{h}$, we next show that the family of functions~$\{t_F\colon\, F\in\cF\}$ \emph{strongly separates points} in~$(\widetilde{\cal W},\dsub)$, that is, we show that for each~$h\in\wt\cW$ and each~$\delta>0$ there exist~$F_1,\dots,F_k\in \cF$ such that
\beq{
\inf_{h'\colon\,\dsub(h,h')\geq\delta}\,\, \max_{1\leq i\leq k}\abs{t_{F_i}(h)-t_{F_i}(h')} > 0
}
(c.f.\ \cite[p.\,113]{Ethier1986}). So, let~$h\in\wt\cW$ and~$\delta>0$ be given. Let~$(F_i)_{i\in\N}$ be the enumeration of~$\cF$ giving rise to~$\dsub$, and choose~$m$ such that 
\beq{
\sum_{i>m} 2^{-i} \leq \delta/2.
}
If~$h'$ is such that~$d_\sub(h,h')\geq \delta$,then we must have
\beqn{\label{20}
\sum_{i=1}^m 2^{-i}\babs{t_{F_i}(h)-f_{F_i}(h')}\geq \frac{\delta}{2}.
}
If all differences~$\babs{t_{F_i}(h)-t_{F_i}(h')}$ were strictly smaller than~$\delta/2m$, then we would have
\beq{
\sum_{i=1}^m 2^{-i}\babs{t_{F_i}(h)-f_{F_i}(h')}\leq
\sum_{i=1}^m \babs{t_{F_i}(h)-f_{F_i}(h')}<\delta/2,
}
which contradicts~\eq{20}, and so at least one of the differences must be larger than~$\delta/(2m)$. Hence 
\beq{
\inf_{h'\colon\,d_\sub(h,h') \geq \delta} \,\, \max_{1\leq i\leq m}
\babs{t_{F_i}(h)-f_{F_i}(h')} \geq \delta/2m,
}
which establishes that~$\{t_F\colon\, F\in\cF\}$ strongly separate points, and therefore, by \cite[Theorem~4.5, p.\,113]{Ethier1986}, is convergence determining. By \cite[Proposition~4.6(b), p.\,115]{Ethier1986}, functions of the form~$t_{F_1}\cdots t_{F_k}$ are convergence determining on the product space~$\wt\cW^k$ equipped with the canonical metric induced by~$\dsub$, and so convergence of the finite-dimensional distributions follows from~\eq{7}. This establishes~$(i)$.
\end{proof}

\subsection{Proof of Theorem~\ref{13}}

\begin{proof}
Using the Skorokhod embedding, we may assume that the processes~$X^{m,n}$, $Y^{m,n}$, $H^{m,n}$, $Y^{m}$ and~$H^{m}$ are constructed in such a way that 
\beqn{
\bclr{Y^{m,n},H^{m,n}} \asconv (Y^m,H^m)\quad(n\toinf) \qquad\text{$\PP$-almost surely.}\label{21}
}
We write~$\PP'$ and~$\EE'$ to denote conditional probability and conditional expectation given $X^{m,n}$,  $Y^{m,n}$, $H^{m,n}$, $Y^{m}$ and~$H^{m}$. We will establish~$(iii)$ of Theorem~\ref{thm1}  to prove our result. We do this in two steps.

\smallskip\noindent
{\bf Step 1: Proof of (\ref{7}).} We first prove a univariate concentration argument of the subgraph densities around their mean. Fix~$s \geq 0$ and a finite graph~$F$. Recall McDiarmid's concentration inequality from \cite{McDiarmid1989}. If~$W=(W_1,\dots,W_N)$ is a collection of~$N$ independent random variables and~$f$ is a function of~$N$ variables such that an arbitrary change of the~$i$-th variable changes the value of~$f$ by at most~$c_i$, then 
\begin{equation}\label{23}
\PP\big[|f(W) - \EE\{f(W)\} | \geq \eps\big] 
\leq 2\exp\bbbclr{\textstyle{-\displaystyle\frac{2\eps^2}{\sum_{i=1}^N c_i^2}}}.
\end{equation} 
Since~$F$ has~$k$ vertices and~$G^{m,n}(s)$ has~$n$ vertices, it follows that~$t^{\inj}_{F}( G^{m,n}(s))$ changes by at most~${n-2\choose k-2}/{n\choose k}={k \choose 2}/{n \choose 2}$ when one edge is changed. Applying~\eq{23} with~$N = {n \choose 2}$ we have, for any~$\eps>0$,
\begin{equation}\label{24}
\PP'\Big[\babs{t^{\inj}_{F}( G^{m,n}(s))-\EE'\{ t^{\inj}_{F}( G^{m,n}(s))\}} > \eps\Big]
\leq 2\exp\bbbclr{-2\eps^2 {n \choose 2}{k \choose 2}^{-2}}.
\end{equation}
The  mean of~$t^{\inj}_{F}( G^{m,n}(s))$ with respect to the connection probabilities can be expressed as
\beqsn{\label{25}
& \EE'\{t^{\inj}_{F}( G^{m,n}(s))\}\\
&\quad = \sum_{\ell_1,\dots,\ell_k=0}^m p_{X^{m,n}_0(ns),\dots,X^{m,n}_m(ns)}(\ell_1,\dots,\ell_k)\\
&\kern10em\times\prod_{\{i,j\}\in E(F)} 
r\Big(H^{m,n}\big(s;\tfrac{\ell_i}{m+1}\big), H^{m,n}\big(s;\tfrac{\ell_j}{m+1}\big)\Big),
}
where~$p_{K_0,\dots,K_m}(\ell_1,\dots,\ell_k)$ is the probability that an ordered sample of~$k$ balls drawn without replacement from an urn with~$K_i$ balls of colour~$i$, where~$0\leq i\leq m$, is such that the~$i$-th ball has colour~$\ell_i$, $0\leq i\leq m$. We note that there is a constant~$C>0$ (which may depend on~$m$ and~$k$ but not on~$n$) such that
\beqn{\label{26}
\bbbabs{ p_{X^{m,n}_0(ns),\dots,X^{m,n}_m(ns)}(\ell_1,\dots,\ell_k)
- \prod_{i=1}^k Y^{m,n}_{\ell_i}(s) } \leq \frac{C}{n}
\qquad\text{for all  $n \in \N$.}
}
Using continuity of~$r$,~\eq{21} and~\eq{26}, we conclude that, $\PP$-almost surely,
\beqsn{\label{27}
& \lim_{n\toinf}\EE'\{t^{\inj}_{F}( G^{m,n}(s))\} \\
& \qquad = \sum_{\ell_1,\dots,\ell_k=0}^m\, \prod_{i=1}^k Y^{m}_{\ell_i}(s) 
\prod_{\{i,j\}\in E(F)} r\Big(H^{m}\big(s; \tfrac{\ell_i}{m+1}\big), H^{m}\big(s;\tfrac{\ell_j}{m+1}\big)\Big)\\
&\qquad = t_F(\tilde{h}^m(s))
}
whenever~$s$ is a point at which~$(Y^m,H^m)$ is continuous almost surely; here we also need the fact that~$H^{m,n}(s)\in C([0,1],[0,1])$, so that, for~$x$ fixed, $H^{m,n}(s;x)$ is a continuous function of~$H^{m,n}(s)$ and converges to~$H^{m}(s;x)$. Using Borel-Cantelli with~\eq{24}, followed by~\eq{27}, we obtain
\beq{
t^{\inj}_{F}( G^{m,n}(s)) \asconv  t_{F}( \tilde{h}^m(s))\quad(n\toinf)\qquad \text{$\PP$-almost surely.}
}
Let~$d \geq 1$, let~$0 < s_1 < s_2 < \ldots <s_d$ be points at which~$(Y^m,H^m)$ is continuous almost surely, and let~$F_1, F_2, \ldots, F_d$ be finite sub-graphs. Then, by the above, we have
\beq{
\prod_{i=1}^d t^\inj_{F_i}(  G^{m,n}(s_i)) \asconv \prod_{i=1}^d  t_{F_i}( \tilde{h}^m(s_i))\quad(n\toinf)\qquad 
\text{$\PP$-almost surely.}
}
Using the bounded convergence theorem, we conclude that
\beq{
\lim_{n\toinf}\EE\bbbclc{\prod_{i=1}^d t^\inj_{F_i}( G^{m,n}(s_i))} = \EE\bbbclc{\prod_{i=1}^d  t_{F_i}( \tilde{h}^m(s_i))},
}
which establishes~\eq{7}.

\smallskip\noindent
\textbf{Step 2: Tightness.} Fix a finite graph~$F$ on~$k$ vertices. We need to show that the family of processes~$\bclr{t^\inj_F( G^{m,n})}_{n\in\N}$ is tight. We show this via tightness of some intermediate processes.  First, for each~$n\geq 1$, let
\beq{
W^{m,n}(s) \Def \sum_{\ell_1,\dots,\ell_k=0}^m\, \prod_{i=1}^k Y^{m,n}_{\ell_i}(s) 
\prod_{\{i,j\}\in E(F)} r\Big(H^{m,n}\big(s;\tfrac{\ell_i}{m+1}\big), H^{m,n}(s;\tfrac{\ell_j}{m+1}\big)\Big), \quad s\geq 0.
}
Note that~$W^{m,n}$ is a continuous function of~$Y^{m,n}$ and~$H^{m,n}$. Since~$\clr{Y^{m,n},H^{m,n}}_{n\geq 1}$ converges weakly and is therefore tight, and since compact sets remain compact under continuous mappings, it follows that~$\clr{W^{m,n}}_{n\geq 1}$ is tight. Second, for each~$n\geq 1$, let
\beq{
V^{m,n}(s) \Def \IE'\bclc{t^\inj_F( G^{m,n}(s))}, \quad s\geq 0.
}
Recalling~\eq{25} and applying~\eq{26}, it follows that
\beq{
  \sup_{s\geq 0}\babs{V^{m,n}(s)-W^{m,n}(s)}\leq \frac{C}{n},
}
and so, by \cite[Inequality~(6.3), p.\,122]{Ethier1986} and \cite[Corollary~7.4, p.\,129]{Ethier1986} and tightness of~$(W^{m,n})_{n\geq 1}$, it follows that~$(V^{m,n})_{n\geq 1}$ is also tight. 

Third, observe that the sequence of events when vertices resample their type follows a Poisson point process with rate~$n$ (since there are~$n$ vertices and each vertex resamples at rate~$1$), so that the number of resampling events in the time interval~$[0,Tn]$, where~$T>0$ is fixed, follows a Poisson distribution with mean~$n\times Tn$. Denote by~$A_{n,m}$ the event that the process~$Y^{m,n}$ saw no more than~$n^3$ such resampling events in the interval~$[0,T]$, and so, letting~$Z\sim\Po(n^2T)$, we have
\beqn{\label{27a}
  \IP[A_{m,n}^c] = \IP[Z>n^3]\leq\frac{\IE Z^2}{n^6} \leq \frac{2T^2}{n^2}, \qquad n^2\geq \frac{1}{T}.
}
On the event~$A_{n,m}$, we now apply a union bound over all resampling events and then apply~\eq{24}, which yields
\beq{
  \PP'\bbbcls{\,\sup_{0\leq s\leq T}\babs{t^{\inj}_{F}( G^{m,n}(s))-V^{m,n}(s)} > \eps\given A_{n,m}}
  \leq 2n^3\exp\bbbclr{-2\eps^2 {n \choose 2}{k \choose 2}^{-2}}.
}
Choosing 
\beq{
  \eps^2 = 2\log(n){k\choose2}^2{n\choose 2}^{-1},
}
we conclude that
\beqn{\label{27b}
  \PP'\bbbcls{\,\sup_{0\leq s\leq T}\babs{t^{\inj}_{F}( G^{m,n}(s))-V^{m,n}(s)} > \frac{k\log n}{n}\given A_{n,m}}
    \leq \frac{2}{n}.
}
Using~\eq{27a} and~\eq{27b} and tightness of~$(V^{m,n})_{n\geq1}$, it is now straightforward to check Condition (b) of \cite[Corollary~7.4, p.\,129]{Ethier1986} by also using \cite[Inequality~(6.3), p.\,122]{Ethier1986}, and it follows that~$\bclr{t^{\inj}_{F}( G^{m,n})}_{n\geq 1}$ is tight.
\end{proof}


\subsection{Proof of Theorem~\ref{17}}

\begin{proof}
Using the Skorokhod embedding, we may assume that the processes~$Z^{m}$, $H^{m}$, $Z$ and~$Z^{m}$ are constructed in such a way that 
\beqn{
(Z^{m},H^m) \asconv (Z,H)\quad(m\toinf) \qquad\text{$\PP$-almost surely. } \label{35}
}
We first establish the following fact. As before, denote by~$\cP([0,1])$ the set of probabilty measures on~$[0,1]$ endowed with the topology of weak convergence, and let~$\lambda:\cP[0,1]\times C([0,1],[0,1])\to\RR$ be defined as
\beq{
  \lambda(\mu,f) = \int_{[0,1]^k} \prod_{\{i,j\}\in E(F)} r(f(x_i),f(x_j))\, \mu(\dd x_1)\cdots\mu(\dd x_k).
}
Then~$\lambda$ is a continuous function. To see this, take a sequence~$(\mu_n,f_n)$ that converges to~$(\mu,f)$. Write
\beqs{
  \abs{\lambda(\mu_n,f_n)-\lambda(\mu,f)}
  \leq \abs{\lambda(\mu_n,f_n)-\lambda(\mu_n,f)}+
  \abs{\lambda(\mu_n,f)-\lambda(\mu,f)}~\eqqcolon r_{1,n} + r_{2,n}.
} 
In order to show that~$r_{1,n}$ converges to zero, it suffices to show that the quantity~$r'_{1,n}\Def\int r(f_n(x),f_n(y))\pi(dx)\pi(dy)$ converges to zero uniformly in~$\pi\in\cP([0,1])$ and then use a telescoping sum and the fact that~$r$ is bounded by~$1$. But since~$r$ and~$f$ are defined on compact sets and are therefore uniformly continuous, it is easy to establish that~$r'_{1,n}$ converges to zero uniformly in~$\pi$. The fact that~$r_{2,n}$ converges to zero follows from the fact that~$\mu_n$ converges weakly to~$\mu$ and that therefore the~$k$-fold convolution converges weakly, too.

We now proceed to establish~$(iii)$ of Theorem~\ref{thm1}. 

\medskip\noindent 
{\bf Step 1: Proof of (\ref{7}).}
For every simple finite graph  $F$ on~$k$ vertices, ~\eq{2} yields the representations
\beq{
t_{F}(\tilde h^m(s))  
=\lambda(Z^m,H^m(s)),\qquad 
t_{F}(\tilde h (s))  
= \lambda(Z,H(s)).
}
Let~$s$ be a point at which~$(Z,H)$ is continuous almost surely. By continuity of~$\lambda$, we conclude that 
\begin{equation*}
\lim_{m\to\infty} t_{F}(\tilde h^m(s)) = t_{F}(\tilde h(s))\quad\text{almost surely.}
\end{equation*}
For~$d\geq 1$ and successive times~$0 \leq s_1< \ldots <s_d < \infty$ at which~$(Z,H)$ is continuous almost surely,~\eq{7} can now be established in the same way as in the previous proof.

\medskip\noindent 
{\bf Step 2: Tightness.} Fix a finite graph~$F$ on~$k$ vertices; we need to show that the family of processes~$\bclr{t_F(\tilde h^{m})}_{m\geq 1}$ is tight. Noting that the function that maps the process~$(Z^m,H^m)$ to~$\lambda\circ(Z^m,H^m)$ is continuous since~$\lambda$ is continuous (c.f.\ \cite[Problem~13, p.\,151]{Ethier1986}), tightness is immediate because it is preserved under continuous mappings.
\end{proof}

  
\section*{Acknowledgements}

SA was supported in part through an ISF-UGC grant and a CPDA from the Indian Statistical Institute. FdH was supported through NWO Gravitation Grant NETWORKS-024.002.003. AR was supported through Singapore Ministry of Education Academic Research Fund Tier 2 grant MOE2018-T2-2-076. Part of this work was done while SA was visiting the Indian Institute of Science, Bangalore, and SA and FdH were visiting the International Center for Theoretical Sciences, Bangalore. The hospitality that was provided at these institutes is greatly appreciated.


\setlength{\bibsep}{0.5ex}
\def\bibfont{\small}


\end{document}